\begin{document}



\newcommand{\dtime}{\frac{d}{d t}} 
\newcommand{\Lie}[2]{\mathcal{L}_{#1} #2} 
\newcommand{\dvar}[1]{\partial_{#1}} 
\newcommand{\tilcov}{\tilde{\nabla}} 
\newcommand{\lin}{\left.\frac{\partial}{\partial s} \right|_{s=0}} 


\newcommand{\G}[3]{\Gamma_{#1 #2}^{#3}} 
\newcommand{\tilG}[3]{\tilde{\Gamma}_{#1 #2}^{#3}} 
\newcommand{\amb}{\mathcal{O}} 
\newcommand{\sP}{\mathcal{P}} 

\newcommand{\sZ}{\mathcal{Z}}
\newcommand{\sT}{\mathcal{T}}
\newcommand{\sQ}{\mathcal{Q}}

\newcommand{\bR}{\mathbb{R}}


\newcommand{\tr}{\mathrm{tr}}       

\newenvironment{todo}{\begin{list}{$\Box$}{}}{\end{list}}


\newtheorem{main}{Theorem}
\renewcommand{\themain}{\Alph{main}}
\newtheorem{thm}{Theorem}[section]
\newtheorem{remark}{Remark}[section]
\newtheorem{lemma}[thm]{Lemma}
\newtheorem{prop}[thm]{Proposition}


\keywords{High-order geometric flows, ambient obstruction tensor, Bach flow, high-order parabolic Schauder estimate, short-time existence}

\subjclass[2000]{53C44, 58J35, 35K25, 35K59}



\title[Geometric flows]{Short-Time Existence for Some Higher-Order Geometric Flows}
\author{Eric Bahuaud}
\address{Department of Mathematics,
Stanford University.}
\email{bahuaud(at)math.stanford.edu}

\author{Dylan Helliwell}
\address{Department of Mathematics, 
Seattle University.}
\email{helliwed(at)seattleu.edu}

\date{\today}

\begin{abstract}
We establish short-time existence and regularity for higher-order flows generated by a class of polynomial natural tensors that, after an adjustment by the Lie derivative of the metric with respect to a suitable vector field, have strongly parabolic linearizations.  We apply this theorem to flows by powers of the Laplacian of the Ricci tensor, and to flows generated by the ambient obstruction tensor.  As a special case, we prove short-time existence for a type of Bach flow.
\end{abstract}

\maketitle


\section{Introduction}

Geometric Flows have played an important role in geometry for many years.  Perhaps the most famous such flow is Ricci flow, introduced by Richard Hamilton.  In this article, we prove a general existence theorem for high-order geometric flows on compact manifolds.  We study the flow
\begin{equation*}
\left\{
\begin{aligned}
\dvar{t} g &= T \\
g(0) &= h
\end{aligned}
\right.
\end{equation*}
where $T$ is a polynomial natural symmetric two-tensor and $h$ is a smooth initial metric.  Motivated by Ricci Flow, we focus on those flows that, after adjusting by the Lie derivative of $g$ with respect to a well chosen vector field, have strongly parabolic linearizations, allowing us to make use of the DeTurck trick.  For such flows, we have the following short-time existence and regularity result:

\begin{main} \label{maintheorem}
Let $T$ be a polynomial natural symmetric two-tensor for a single metric $g$ in dimension $n$ with the property that there is a polynomial natural vector field $W$ for two metrics $g$ and $h$ in dimension $n$, such that the linearization of $T+ \Lie{W}{g}$ at any metric $h$ is a strongly elliptic linear operator of order $2m$.  Then, on any compact manifold of dimension $n$ and any initial metric $h$, there is a smooth short-time solution to
\begin{equation}
\tag{$\ast$}\label{originalflow}
\left\{ \begin{aligned} 
\dvar{t} g &= T \\
g(0) &= h.
\end{aligned} \right.
\end{equation}
\end{main}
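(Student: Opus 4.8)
The plan is to run the DeTurck trick at order $2m$, just as for Ricci flow when $m=1$: trade the (necessarily non-elliptic) flow \eqref{originalflow} for a strongly parabolic one by adding a gauge-fixing Lie derivative, solve that equation using linear higher-order parabolic theory, and then pull the solution back by a flow of diffeomorphisms. \emph{Step 1 (the DeTurck flow).} Fix the smooth initial metric $h$ and, freezing the second argument of $W$ to be $h$, set $\tilde T(g) := T(g) + \Lie{W(g,h)}{g}$. Since $T$ is a polynomial natural tensor and $W$ a polynomial natural vector field, $\tilde T$ is a quasilinear operator of order $2m$ whose coefficients are polynomial --- hence smooth --- functions of $g$, $g^{-1}$ and the spatial derivatives of $g$ of order at most $2m-1$. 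By hypothesis its linearization $D\tilde T|_h$ is strongly elliptic of order $2m$, so
\begin{equation*}
\dvar{t}\tilde g = \tilde T(\tilde g), \qquad \tilde g(0)=h
\end{equation*}
is a strongly parabolic quasilinear system of order $2m$.

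\emph{Step 2 (short-time existence and smoothness for the DeTurck flow).} Writing $\tilde g = h + u$, the system takes the form $\dvar{t} u = \mathcal A u + Q(u)$ with $\mathcal A := D\tilde T|_h$ strongly elliptic of order $2m$ and $Q$ a polynomially nonlinear operator of order at most $2m$ satisfying $Q(0)=0$ and $DQ|_0=0$. In the anisotropic parabolic H\"older spaces adapted to the $2m$-parabolic scaling, the high-order parabolic Schauder estimate makes $\dvar{t}-\mathcal A$ an isomorphism onto the appropriate data space for the problem with vanishing initial data on $M\times[0,\tau]$; because $M$ is closed there are no compatibility conditions to check, and $h$ is smooth. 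A contraction-mapping argument, valid once $\tau$ is small enough that the Lipschitz constant of $Q$ on a small ball is controlled, then produces a solution $u$ on some $M\times[0,\tau]$; differentiating the equation in $t$ and applying the interior Schauder estimates to spatial difference quotients bootstraps $u$ to a smooth solution. Thus the DeTurck flow admits a smooth solution $\tilde g$ on $M\times[0,\tau)$ with $\tilde g(0)=h$.

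\emph{Step 3 (removing the gauge).} Let $X_t := -W(\tilde g(t),h)$, a smooth time-dependent vector field on the closed manifold $M$, and let $\varphi_t$ be its flow, $\dvar{t}\varphi_t = X_t\circ\varphi_t$ with $\varphi_0=\mathrm{id}_M$; by compactness this exists and consists of diffeomorphisms for all $t\in[0,\tau)$. Put $g(t):=\varphi_t^*\tilde g(t)$. Then
\begin{align*}
\dvar{t} g &= \varphi_t^*\big(\dvar{t}\tilde g + \Lie{X_t}{\tilde g}\big) = \varphi_t^*\big(T(\tilde g) + \Lie{W}{\tilde g} - \Lie{W}{\tilde g}\big)\\
&= \varphi_t^*\,T(\tilde g) = T(\varphi_t^*\tilde g) = T(g),
\end{align*}
the penultimate equality being the diffeomorphism equivariance of the natural tensor $T$, while $g(0)=\varphi_0^*h=h$. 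Since $\tilde g$ and $\varphi_t$ are smooth, $g$ is a smooth short-time solution of \eqref{originalflow}.

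\emph{Main obstacle.} The heart of the matter is Step 2: for $2m>2$ there is no maximum principle, so short-time existence must be extracted entirely from linear estimates. The real work goes into setting up the correct anisotropic parabolic H\"older (or $L^p$) spaces, establishing the high-order parabolic Schauder estimate that inverts $\dvar{t}-\mathcal A$ with uniform bounds, and verifying that the polynomial nonlinearity $Q$ is genuinely a lower-order perturbation in that functional-analytic setting, so that the fixed-point scheme closes.
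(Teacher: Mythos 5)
Your overall strategy is exactly the paper's: replace \eqref{originalflow} by the gauge-fixed flow $\dvar{t}\tilde g = T(\tilde g)+\mathcal{L}_{W(\tilde g,h)}\tilde g$, expand about the initial metric $h$, solve by higher-order parabolic Schauder theory plus a contraction mapping, bootstrap to smoothness, and pull back by the flow of $-W$ (your Step 3 is verbatim the paper's Lemma on the DeTurck trick). So the architecture is right.

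However, there is a concrete error in Step 2 that, as written, breaks the fixed-point scheme. Setting $\tilde T(g)=T(g)+\mathcal{L}_{W(g,h)}g$ and $\tilde g=h+u$, the remainder $Q(u):=\tilde T(h+u)-D\tilde T|_h\,u$ satisfies $Q(0)=\tilde T(h)$, which is \emph{not} zero in general (already for Ricci flow it equals $-2\,\mathrm{Ric}(h)$). So the equation is not $\partial_t u=\mathcal{A}u+Q(u)$ with $Q$ vanishing to second order at $0$; it is
\begin{equation*}
\partial_t u=\mathcal{A}u+\mathcal{Q}(u)+I,\qquad I:=\tilde T(h),
\end{equation*}
with $\mathcal{Q}$ genuinely quadratic. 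This matters: with your hypotheses $u\equiv 0$ would be a fixed point of the iteration, whereas with the correct decomposition one must show separately that the Duhamel contribution of the time-independent source $I$, namely $\int_0^t H(t-s)I\,ds$, has small $C^{2m,0;\alpha}$-norm. The paper does this by noting that this piece is smooth (parabolic regularity with smooth data), that its sup-norm and spatial derivatives are $O(T)$, and then using an interpolation inequality for the parabolic H\"older seminorm to conclude the full norm is $O(T)$; only after that does the map preserve the small ball $\{\|u\|_{C^{2m,0;\alpha}}\le\mu,\ u(0)=0\}$ on which $\mathcal{Q}$ is a contraction. You should also make explicit (as the paper does via its Proposition on writing natural tensors in terms of $h$-covariant derivatives) that $\tilde T(h+u)$ can be expressed globally in $\tilde\nabla^j u$, $h$, $h^{-1}$ and curvature of $h$, so that the quadratic and Lipschitz estimates for $\mathcal{Q}$ make sense on the compact manifold in the chosen H\"older spaces. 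With the inhomogeneous term restored and estimated, your argument matches the paper's.
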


There are two main families of flows to which we apply Theorem \ref{maintheorem}.  The first family generalizes Ricci flow to allow for powers of the Laplacian.  For this, we note that our convention for powers of the Laplacian is
\[
\Delta^p = g^{j_1 k_1} \cdots g^{j_p k_p} \nabla_{j_1} \nabla_{k_1} \cdots \nabla_{j_p} \nabla_{k_p}.
\]

\begin{main} \label{plaprictheorem}
Let $h$ be a smooth metric on a compact manifold M and let $p \geq 0$ be an integer.  Then there is a smooth short-time solution to the following flow:
\begin{equation}
\tag{$\ast \ast$} \label{plapricflow}
\left\{
\begin{aligned}
\dvar{t} g &= 2 (-1)^{p+1} \Delta^p Ric\\
g(0) &= h.
\end{aligned}
\right.
\end{equation}
\end{main}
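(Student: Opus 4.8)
The plan is to deduce Theorem \ref{plaprictheorem} from Theorem \ref{maintheorem} by producing a suitable DeTurck vector field, taking $2m = 2p+2$. First I would observe that $T := 2(-1)^{p+1}\Delta^p Ric$ is a polynomial natural symmetric two-tensor — it is assembled from $g$, $g^{-1}$ and iterated covariant derivatives of $g$ — and that it is a differential operator of order $2p+2$ in $g$, so $m = p+1$. The real work is then to exhibit a polynomial natural vector field $W = W(g,h)$ for which the linearization of $T + \Lie{W}{g}$ in $g$, at $g = h$, is strongly elliptic of order $2p+2$.

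The candidate I would use is a power of the Laplacian applied to the usual Ricci--DeTurck vector field. Let $V = V(g,h)$ have components $V^k = g^{ij}\bigl(\G{i}{j}{k}(g) - \G{i}{j}{k}(h)\bigr)$, the $g$-contraction of the difference of the Levi-Civita connections of $g$ and $h$, and set
\[
W \;=\; (-1)^p\,\Delta^p V ,
\]
where $\Delta$ and all covariant derivatives are those of $g$. I would check that $W$ is polynomial natural for the pair $(g,h)$ — the bracketed difference is a tensor, polynomial in $g^{-1}$, $h^{-1}$ and the first derivatives of $g$ and $h$, and $\Delta^p$ introduces only the connection of $g$ — and that $W$ has order $2p+1$, so $\Lie{W}{g}$ has order $2p+2$, matching $T$.

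To compute the linearization at $g = h$, I would exploit that $V(h,h) = 0$: the variation of $\Lie{W}{g}$ then reduces to $\Lie{(\delta W)}{h}$, and the variation of $W = (-1)^p\Delta^p V$ reduces to $(-1)^p\Delta_h^p(\delta V)$, since differentiating the $g$-dependence of $\Delta^p$ lands on the vanishing $V$. Here $\delta V$ is a first-order operator in $k := \delta g$ whose top-order part is the raised DeTurck one-form, and the classical linearization formula for $Ric$ gives that $-2\,\delta Ric + \Lie{(\delta V)}{h}$ has principal symbol $-|\xi|^2 k$; equivalently, writing $\sigma[\delta(-2Ric)](\xi)k = -|\xi|^2 k + B(\xi)k$ for the degenerate ``bad'' part $B$, one has $\sigma[\Lie{(\delta V)}{h}](\xi)k = -B(\xi)k$. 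This classical $p=0$ identity is the engine; the rest is symbol bookkeeping. Since $\Delta^p$ has principal symbol $(-1)^p|\xi|^{2p}$, and since every correction coming from the $g$-dependence of $\Delta^p$, from commuting covariant derivatives, and from the zeroth-order curvature terms in $\delta Ric$ is of order at most $2p$ in $k$ and so cannot affect the symbol in degree $2p+2$, I expect
\[
\sigma[\delta T](\xi)k = |\xi|^{2p}\bigl(-|\xi|^2 k + B(\xi)k\bigr), \qquad \sigma[\delta\Lie{W}{g}](\xi)k = -|\xi|^{2p}B(\xi)k ,
\]
whence $\sigma[\delta(T + \Lie{W}{g})](\xi)k = -|\xi|^{2p+2}k$, which is negative definite for $\xi \neq 0$. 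That makes $T + \Lie{W}{g}$ strongly elliptic of order $2p+2$, and Theorem \ref{maintheorem} then yields the smooth short-time solution of \eqref{plapricflow}.

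The main obstacle I anticipate is exactly this order bookkeeping: one must verify carefully that all the lower-order corrections generated by the metric-dependence of $\Delta^p$ and by the commutators fall strictly below order $2p+2$, so that after factoring out $(-1)^p|\xi|^{2p}$ the principal symbol genuinely reduces to that of the classical Ricci--DeTurck operator. (A secondary, minor task is confirming that $W$ as defined meets the paper's precise definition of a polynomial natural vector field for two metrics, and that the power $\Delta^p$ used there matches the stated convention up to lower-order terms.) Neither step is hard, but both should be carried out explicitly before invoking Theorem \ref{maintheorem}.
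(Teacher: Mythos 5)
Your proposal is correct and takes essentially the same route as the paper: the same adjusted vector field $W=(-1)^p\Delta^p V$ with $V^k=g^{ij}(\G{i}{j}{k}(g)-\tilG{i}{j}{k})$, followed by checking strong ellipticity of the linearization of $2(-1)^{p+1}\Delta^p Ric+\Lie{W}{g}$ at $h$ and invoking Theorem \ref{maintheorem}. The only cosmetic difference is that you argue via the principal symbol and the classical $p=0$ Ricci--DeTurck cancellation identity, while the paper carries out the coordinate computation of $\Delta^p Ric$ and $\Lie{W}{g}$ directly to isolate and cancel the off-diagonal top-order terms before reading off ellipticity.
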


Note that when $p = 0$, we have Ricci flow, motivating the choice of the 2 in the flow equation.  When $p=1$, the steady state solutions are ``Ricci-harmonic'' manifolds.

The second family involves the ambient obstruction tensor $\amb_n$ introduced by Charles Fefferman and Robin Graham.  Note here that $S$ is the scalar curvature of $g$.  

\begin{main} \label{ambienttheorem}
Let $h$ be a smooth metric on a compact manifold of even dimension $n \geq 4$ and let
\[
c_n = \frac{1}{2^{\frac{n}{2}-1}\left(\frac{n}{2}-2\right)!(n-2)(n-1)}.
\]
Then there is a smooth short-time solution to the following flow:
\begin{equation}
\tag{$\ast \! \ast \! \ast$} \label{ambflow}
\left\{
\begin{aligned}
\dvar{t} g &= \amb_n + c_n (-1)^{\frac{n}{2}} (\Delta^{\frac{n}{2}-1} S) g  \\
g(0) &= h.
\end{aligned}
\right.
\end{equation}
\end{main}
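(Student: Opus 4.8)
The plan is to apply Theorem \ref{maintheorem} with $2m = n$ and with $T = \amb_n + c_n(-1)^{n/2}(\Delta^{n/2-1}S)\,g$, so that everything reduces to exhibiting a polynomial natural vector field $W$ for which the linearization of $T + \Lie{W}{g}$ at an arbitrary metric $h$ is strongly elliptic of order $n$. First I would collect the structural facts about the Fefferman--Graham obstruction tensor in even dimension $n\ge 4$: $\amb_n$ is a polynomial natural symmetric two-tensor of differential order $n$ (hence so is $T$, as $(\Delta^{n/2-1}S)g$ visibly is), it is trace-free and divergence-free, and its top-order part is given by the Graham--Hirachi normal-form identity
\[
(\amb_n)_{ij} \;=\; \frac{1}{(-2)^{n/2-2}\bigl(\tfrac n2-2\bigr)!}\Bigl(\Delta^{n/2-1}P_{ij} \;-\; \tfrac{1}{2(n-1)}\,\Delta^{n/2-2}\nabla_i\nabla_j S\Bigr) \;+\; (\text{lower order}),
\]
where $P_{ij} = \tfrac{1}{n-2}\bigl(R_{ij} - \tfrac{S}{2(n-1)}g_{ij}\bigr)$ is the Schouten tensor and ``lower order'' collects all terms of differential order $\le n-1$ --- in particular every term at least quadratic in the curvature lands there and is invisible to the order-$n$ principal symbol.

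Next I would linearize at a metric $h$. Writing $k$ for a symmetric two-tensor, $\xi$ for a covector, $q=\xi\otimes\xi$, $\tau = \tr_h k$, and $\rho_{ij} = \xi_i\xi^p k_{pj} + \xi_j\xi^p k_{pi}$, the standard formulas give, modulo lower order, $\sigma(D\mathrm{Ric})(\xi)k = \tfrac12|\xi|^2 k - \tfrac12\rho + \tfrac12\tau\, q$ and $\sigma(DS)(\xi)k = \tr_h\bigl(\sigma(D\mathrm{Ric})(\xi)k\bigr)$. Feeding these into the Graham--Hirachi formula (the linearizations of the Laplacians themselves and the commutators they generate are all lower order), the principal symbol of $DT$ at $h$ becomes an explicit linear combination of the five symmetric tensors $|\xi|^2 k$, $\rho$, $\tau\, q$, $\sigma(DS)(\xi)k\cdot g$, and $\sigma(DS)(\xi)k\cdot q$, each with an elementary constant times a power of $|\xi|^2$. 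The first thing to verify is that the coefficient of $\sigma(DS)(\xi)k\cdot g$ vanishes: the contribution from $\amb_n$ is $-\tfrac{c'}{2(n-1)(n-2)}$ with $c' = 1/\bigl((-2)^{n/2-2}(\tfrac n2-2)!\bigr)$, while the contribution of the correction term is $c_n(-1)^{n/2}$, and the displayed value of $c_n$ is exactly the one for which these cancel. This is the step where the precise constant $c_n$ is forced.

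It then remains to remove the three ``divergence'' directions $\rho$, $\tau\, q$, $\sigma(DS)(\xi)k\cdot q$, and these are precisely the principal parts of Lie-derivative terms. I would therefore take the higher-order DeTurck vector field
\[
W \;=\; a_n\,\Delta^{n/2-1} V \;+\; b_n\, \nabla\,\Delta^{n/2-2} S, \qquad V^k = g^{pq}\bigl(\G{p}{q}{k} - \tilG{p}{q}{k}\bigr),
\]
where $\tilG{p}{q}{k}$ are the Christoffel symbols of the fixed metric $h$ (so $V$ is the usual DeTurck field, order $1$), the Laplacians may be taken with respect to either metric since the ambiguity is lower order, and $a_n,b_n$ are constants to be chosen. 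One computes $\sigma\bigl(D(\Lie{V}{g})\bigr)(\xi)k = q\tau - \rho$, so $a_n\Delta^{n/2-1}V$ kills the $\rho$ and $\tau\, q$ pieces for suitable $a_n$; and $\Lie{\nabla\phi}{g} = 2\nabla^2\phi$ shows the gradient summand kills the $\sigma(DS)(\xi)k\cdot q$ piece for suitable $b_n$ (a short computation shows the three matching conditions on $(a_n,b_n)$ are compatible). Both summands of $W$ are polynomial natural for $g$ and $h$ and have order $n-1$, so $\Lie{W}{g}$ has order $n$, and the remaining symbol is
\[
\sigma\bigl(D(T + \Lie{W}{g})\bigr)(\xi) \;=\; -\,c_0\,|\xi|^{n}\cdot\mathrm{Id}, \qquad c_0 = \frac{1}{2^{n/2-1}\bigl(\tfrac n2-2\bigr)!\,(n-2)} > 0,
\]
which is negative definite, hence a strongly elliptic operator of order $n = 2m$. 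Theorem \ref{maintheorem} then yields the smooth short-time solution.

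The main obstacle is the bookkeeping behind the two symbol computations: one must invoke (or reprove) the Graham--Hirachi top-order form of $\amb_n$ and be certain that all the genuinely lower-order terms --- including the many quadratic curvature terms --- drop out of the order-$n$ symbol, and then carry the DeTurck modification through $\tfrac n2-1$ powers of the Laplacian. The conceptual reason this is not a one-line corollary of Theorem \ref{maintheorem} is that $\amb_n$ by itself cannot be parabolized: being trace-free, its linearization is degenerate in the pure-trace (conformal) direction, and it is exactly the correction term $c_n(-1)^{n/2}(\Delta^{n/2-1}S)g$ --- with its precise constant --- that restores ellipticity there, while the $\nabla^2 S$ term in the Graham--Hirachi expansion is what forces the extra gradient summand $\nabla\Delta^{n/2-2}S$ in $W$, beyond the naive iterate of the classical DeTurck field.
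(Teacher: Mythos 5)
Your proposal is correct and follows essentially the same route as the paper: both expand $\amb_n$ via the Graham--Hirachi normal form \eqref{obstructiondef}, observe that the constant $c_n$ is exactly the one that cancels the $(\Delta^{\frac{n}{2}-1}S)\,g$ contribution coming from the Schouten tensor, and then absorb the remaining non-elliptic directions with $W = a_n\,\Delta^{\frac{n}{2}-1}V + b_n\,\nabla\Delta^{\frac{n}{2}-2}S$, reducing to the strong ellipticity already established for the $\Delta^p\mathrm{Ric}$ flow. Your version merely organizes the verification a bit more explicitly as a principal-symbol computation in the five directions $|\xi|^2k$, $\rho$, $\tau q$, $\sigma(DS)g$, $\sigma(DS)q$, where the paper instead reduces directly to the computation of Lemma 5.1 and Theorem \ref{plaprictheorem}.
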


Note that when $n = 4$, the ambient obstruction tensor is the Bach tensor, and so we have a type of Bach flow.  While this flow may not be the first choice for ``Ambient Obstruction Flow'' (because of the additional $\Delta^{\frac{n}{2}-1} S g$ term) it does have the nice property that because the ambient obstruction tensor is trace free, steady state solutions cause the ambient obstruction tensor and the given power of the Laplacian of $S$ to vanish, which implies that on a compact manifold, the scalar curvature must be constant.

This paper is organized as follows.  In Section \ref{background}, we introduce our notation and develop the background for studying the geometric flows in which we are interested.  Ricci flow, which motivated this work is discussed and the so called DeTurck trick is generalized.  Polynomial natural tensors are defined and characterized.  Finally, the ambient obstruction tensor is introduced and its properties are discussed.

The necessary parabolic Schauder theory is discussed in Section \ref{schauder}.  H\"{o}lder spaces for the parabolic setting in question are defined.  Existence results for higher-order linear systems are stated.  Lastly, the regularity result necessary to our setting is provided.

In Section \ref{mainthm}, Theorem \ref{maintheorem} is proved.  This is achieved by conditioning the equation, extracting the linear part, using parabolic Schauder theory for high-order systems, and then applying the contraction mapping theorem.

Finally, in Section \ref{applications}, we prove Theorems \ref{plaprictheorem} and \ref{ambienttheorem}.  In each case, this is done by finding a vector field that removes certain undesirable terms so that the linearization of the adjusted flows are strongly parabolic.  Theorem \ref{maintheorem} then applies.  Once Theorem \ref{ambienttheorem} is proved, we discuss the effect of a conformal transformation, motivated by the fact that the ambient obstruction tensor is conformally invariant.

The authors would like to thank Robin Graham, Rafe Mazzeo and Andr\'as Vasy for the many helpful discussions during the course of this work.  The second author is also grateful to the members of the Mathematics department at Stanford University for their hospitality during his sabbatical visit.


\section{Background} \label{background}

Throughout this paper, $M$ will be a smooth compact manifold of dimension $n$ admitting a smooth Riemannian metric $h$.  This metric will serve as our background metric and our initial metric for the various flow problems we consider.  Quantities associated with $h$ will be indicated with a tilde.  The metric $g$ will be used to denote a solution to the flow problems or for generic arguments.  Quantities associated with $g$ will be unadorned.  We will denote the bundle of symmetric two tensors on $M$ by $\Sigma^2(M)$.  Our convention for the Laplacian is given for example by
\[ \widetilde{\Delta} u = h^{ij} ( \partial^2_{ij} u - \tilG{i}{j}{k} \partial_k u). \]
It is to be understood that when working with various flow equations, the quantities being used are for a specific time slice (as opposed to a quantity on the manifold $M \times [0,T]$).

\subsection{Ricci Flow and the DeTurck Trick}

Introduced by Richard Hamilton, Ricci Flow has enjoyed a great deal of popularity, owing to its central role in Grigory Perelman's proof of William Thurston's geometrization conjecture.  See \cite{ChowLN, MorganTian, Topping} and the references therein for more about this important flow.  

Ricci flow is the following:
\begin{equation*}
\left\{
\begin{aligned}
\dvar{t} g &= -2 Ric \\
g(0) &= h.
\end{aligned}
\right.
\end{equation*}

This flow is only weakly parabolic because of the diffeomorphism invariance of $Ric$.  Hamilton overcame this problem by making use of the Nash Moser Inverse Function Theorem \cite{Hamilton}.  Later, Dennis DeTurck developed a technique to circumvent this problem and establish short-time existence using simpler methods \cite{DeTurck}.  The so called DeTurck trick involves finding an appropriate time-dependent vector field $W$ such that adding the Lie derivative of the metric with respect to $W$ creates a strongly elliptic operator.

Let $A$ be the difference tensor:
\[
A_{ij}^k = \Gamma_{ij}^k - \tilde{\Gamma}_{ij}^k.
\]
Define a time-dependent vector field $W$:
\[
W^k = g^{pq} (\Gamma_{pq}^k - \tilde{\Gamma}_{pq}^k),
\]
and consider the flow
\begin{equation*}
\left\{
\begin{aligned}
\dvar{t} g &= -2 Ric + \Lie{W}{g} \label{ricdeturck} \\
g(0) &= h.
\end{aligned}
\right.
\end{equation*}

This adjusted flow can be solved by standard techniques.  To produce a solution to the original flow, the solution to the adjusted flow is pulled back by diffeomorphisms generated by $-W$.  This process works in a more general context and we will need it for our results.  The necessary fact is the following:

\begin{lemma} (The DeTurck Trick) \label{deturck}
Given a smooth natural tensor $T$ for a single metric $g$ (see below for the definition of a natural tensor), a fixed metric $h$, and a smooth time-dependent vector field $W$, let $g$ be a smooth one-parameter family of metrics that solves
\begin{equation*}
\left\{
\begin{aligned}
\dvar{t} g &= T(g) + \Lie{W}{g} \\
g_0 &= h.
\end{aligned}
\right.
\end{equation*}
Let $\theta$ be the flow generated by $-W$.  Then
$\bar{g} = \theta_{(t,0)}^*(g)$ solves
\begin{equation*}
\left\{
\begin{aligned}
\dvar{t} \bar{g} &= T(\bar{g}) \\
\bar{g}_0 &= h.
\end{aligned}
\right.
\end{equation*}
\end{lemma}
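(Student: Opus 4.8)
The plan is to verify directly that $\bar g = \theta_{(t,0)}^* g$ satisfies the claimed equation, reducing the statement to two ingredients: the naturality (diffeomorphism equivariance) of $T$, and the standard formula expressing the $t$-derivative of a pulled-back time-dependent tensor in terms of the pullback of the $t$-derivative plus a Lie derivative term. First I would recall that $\theta_{(t,s)}$ denotes the two-parameter flow of the time-dependent vector field $-W$, so that $\theta_{(s,s)} = \mathrm{id}$ and $\partial_t \theta_{(t,s)} = -W \circ \theta_{(t,s)}$; in particular for a time-dependent tensor $\sigma(t)$ one has the identity
\[
\dvar{t}\bigl(\theta_{(t,0)}^*\sigma(t)\bigr) = \theta_{(t,0)}^*\Bigl(\dvar{t}\sigma(t)\Bigr) - \theta_{(t,0)}^*\bigl(\Lie{W}{\sigma(t)}\bigr).
\]
This is the essential computational fact; I would either cite it or prove it in one line by writing $\theta_{(t,0)}^* = \theta_{(t',0)}^* \circ \theta_{(t,t')}^*$, differentiating at $t' = t$, and using that $\partial_t|_{t'=t}\theta_{(t,t')}^*\sigma = \Lie{-W}{\sigma}$ together with the chain rule for the first factor.

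Next I would apply this with $\sigma(t) = g(t)$. Using the flow equation $\dvar{t} g = T(g) + \Lie{W}{g}$ we get
\[
\dvar{t}\bar g = \theta_{(t,0)}^*\bigl(T(g) + \Lie{W}{g}\bigr) - \theta_{(t,0)}^*\bigl(\Lie{W}{g}\bigr) = \theta_{(t,0)}^*\bigl(T(g)\bigr).
\]
Now naturality of $T$ enters: since $T$ is a natural tensor for a single metric, it commutes with pullback by diffeomorphisms, so $\theta_{(t,0)}^*\bigl(T(g)\bigr) = T\bigl(\theta_{(t,0)}^* g\bigr) = T(\bar g)$. This gives $\dvar{t}\bar g = T(\bar g)$. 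Finally, the initial condition is immediate: $\theta_{(0,0)} = \mathrm{id}$, so $\bar g_0 = g_0 = h$.

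The only genuine subtlety — and the step I would treat most carefully — is making sure the two-parameter flow $\theta_{(t,s)}$ is well-defined on all of $M$ for the relevant time interval and is smooth jointly in $(t,s)$; on a compact manifold with $W$ smooth in space and time this is standard ODE theory, but it is worth stating explicitly since $W$ here is time-dependent. One should also note that $\theta_{(t,0)}$ is a diffeomorphism (its inverse is $\theta_{(0,t)}$), which is what licenses the use of naturality. Everything else is a short formal computation, so I would keep the writeup brief, isolating the pulled-back-derivative identity as the one lemma-level fact and invoking naturality as a black box with a pointer to its definition below.
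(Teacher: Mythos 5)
Your proof is correct and follows essentially the same route as the paper: both apply the standard identity for the time derivative of a pullback by a time-dependent flow, cancel the resulting Lie derivative against the one in the flow equation, and invoke naturality of $T$ to commute it past the pullback. The extra care you take with the two-parameter flow $\theta_{(t,s)}$ and with why $\theta_{(t,0)}$ is a diffeomorphism is sound but not a departure from the paper's argument.
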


\begin{proof}
In what follows, we append subscripts to indicate which time slice is being considered.  Note that the initial condition is satisfied since $\theta_{(0,0)} = id$.  Now, at a point $p \in M$ and a time $s$, we have
\begin{align*}
\left. \dtime \right|_{t=s} ({\bar{g}_t})_p &= \left. \dtime \right|_{t=s} (\theta_{(t,0)}^*(g_t))_p \\
		&= \left( \theta_{(s,0)}^*\left( \left. \dtime \right|_{t=s} g_t + \Lie{-W_s}{g_s} \right) \right)_p \\
		&= \left( \theta_{(s,0)}^*(T(g_s)+ \Lie{W_s}{g_s}) \right)_p
			- \left( \theta_{(s,0)}^* \Lie{W_s}{g_s} \right)_p \\
		&= \left( T(\theta_{(s,0)}^* g_s) \right)_p \\
		&= (T(\bar{g}_s))_p.
\end{align*}
(See \cite{Jack2} for details surrounding manipulations of time-dependent vector fields.)
\end{proof}

It should be noted that this is really just the first half of the DeTurck Trick.  The second half involves using harmonic map heat flow to prove uniqueness.  We do not generalize or try to make use of the second half of the DeTurck Trick.

\subsection{Polynomial Natural Tensors}

We say a tensor $T = T(g_1, \ldots, g_m)$ is a \textit{natural tensor} for $m$ metrics in dimension $n$ if for any $n$-manifold $M$ and any diffeomorphism $f$ of $M$, we have $T(f^*(g_1), \ldots, f^*(g_m)) = f^*(T(g_1, \ldots, g_m))$ (i.e. it is preserved by isometries).  We say a natural tensor is \textit{polynomial} if, in any coordinate system, its coefficients are polynomials in the coefficients of the coordinate derivatives of $g_i$ up to some $\partial^k g_i$, and $g_i^{-1}$.  When necessary, we will say a polynomial natural tensor is \textit{of order k} to indicate the highest order derivatives involved in the expression and we will indicate the order with a subscript.  As examples, $Ric$ is a polynomial natural tensor of order two for one metric in any dimension, the difference tensor $A$ is a polynomial natural tensor of order one for two metrics in any dimension, and the Bach tensor is a polynomial natural tensor of order four for a single metric in dimension 4.

In order to make use of existence results for parabolic systems, we will need to express all the derivatives in a given polynomial natural tensor $T$ as covariant derivatives with respect to the background metric $h$.  The necessary result is the following.

\begin{prop} \label{invariantsWRTh}
Let $T(g_1, g_2, \ldots, g_m)$ be a polynomial natural tensor and let $h$ be a background metric.  Then $T$ can be expressed as a linear combination of contractions (and sharpenings and flattenings) with respect to the $g_i$ of covariant derivatives with respect to $h$ of the $g_i$ and curvature for $h$.
\end{prop}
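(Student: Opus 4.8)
The plan is to prove the asserted identity pointwise. Since $T$ is a tensor and the expression to be constructed will manifestly be a tensor, it suffices to produce, at an arbitrary point $p\in M$, an identity that holds in one convenient coordinate system, and I will use $h$-normal coordinates centred at $p$. Two classical ingredients drive the argument. First, iterating the Leibniz rule together with the schematic identity $\partial_j S = \tilcov_j S + \tilde\Gamma \ast S$ (valid for any tensor $S$), one sees that for every multi-index $\alpha$ with $|\alpha|\le k$ the coordinate derivative $\partial^{\alpha}(g_i)_{ab}$ equals $\tilcov^{\alpha}(g_i)_{ab}$ plus a polynomial, with universal (metric-independent) constant coefficients, in the components of $\tilcov^{j}(g_i)$ for $j\le|\alpha|-1$ and of the coordinate objects $\tilde\Gamma,\ \partial\tilde\Gamma,\ \dots,\ \partial^{|\alpha|-1}\tilde\Gamma$. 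Second, in $h$-normal coordinates at $p$ one has $h_{ab}(p)=\delta_{ab}$, $\tilde\Gamma^{c}_{ab}(p)=0$, and for each $j\ge1$ the coordinate derivatives $\partial^{\beta}\tilde\Gamma^{c}_{ab}(p)$ with $|\beta|=j$ are universal polynomials in the normal-frame components of $\tilde R_p,\ \tilcov\tilde R_p,\ \dots,\ \tilcov^{j-1}\tilde R_p$; equivalently, each is the value at $p$ of a fixed linear combination of contractions of $\tilcov^{j-1}\tilde R$ and lower against copies of $h^{-1}$ (the Riemann normal-coordinate expansion of the metric).

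Granting these, here are the steps. Using the characterization of polynomial natural tensors from Section~\ref{background} --- in particular that naturality forces the defining polynomial expression of $T$ to be a finite linear combination of coordinate contractions of factors $\partial^{\alpha}(g_i)_{ab}$ (with $|\alpha|\le k$) and $(g_i^{-1})^{ab}$ --- I would first write $T$ in that form in arbitrary coordinates. Next, in each monomial I substitute for every factor $\partial^{\alpha}(g_i)_{ab}$ its expression from the first ingredient and expand; the result is a universal linear combination of contractions of factors drawn from $\{\tilcov^{j}(g_i):j\le k\}$, $g_i^{-1}$, and the non-tensorial objects $\partial^{\beta}\tilde\Gamma$ with $|\beta|\le k-1$, an identity that holds in every coordinate system. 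Finally I freeze $p$ and pass to $h$-normal coordinates there: each bare $\tilde\Gamma(p)$ drops out and each $\partial^{\beta}\tilde\Gamma(p)$ is replaced, via the second ingredient, by the corresponding contraction of $\tilcov^{|\beta|-1}\tilde R$ and lower (against $h^{-1}$'s, which at $p$ costs nothing). What remains is the value at $p$ of a fixed linear combination $E$, independent of $p$ because every coefficient introduced above is a universal constant, of contractions --- with respect to the $g_i$ (and, wherever a curvature index must be raised, $h$) and their inverses --- of covariant derivatives with respect to $h$ of the $g_i$ and of the curvature of $h$. Since $E$ is assembled from tensors by tensor product, contraction, and raising and lowering indices it is a globally defined tensor; the computation gives $T_p=E_p$ in the normal frame at $p$, hence $T_p=E_p$ as tensors; and as the recipe for $E$ is independent of $p$ we conclude $T=E$.

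The work here is bookkeeping rather than anything deep. The points that need care are: setting up the iterated ``partials-to-covariant-derivatives'' expansion cleanly enough to confirm that its coefficients are genuinely universal and that its leading term is exactly $\tilcov^{\alpha}(g_i)$; and invoking (or reproving) the normal-coordinate formulas $\partial^{\beta}\tilde\Gamma(p)=Q_{\beta}\bigl(\tilde R_p,\dots,\tilcov^{|\beta|-1}\tilde R_p\bigr)$ with $Q_{\beta}$ a universal polynomial. A smaller point worth a sentence is that the curvature factors enter the final contractions with indices raised by $h$ rather than by the $g_i$, so one should either allow $h$ and $h^{-1}$ among the metrics used for contraction --- a harmless widening of the statement --- or include $h$ in the list $g_1,\dots,g_m$ from the start.
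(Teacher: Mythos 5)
Your argument is exactly the one the paper sketches: work at a point in $h$-normal coordinates, replace coordinate derivatives of the $g_i$ by $\tilcov$-derivatives, and absorb the resulting $\partial^\beta\tilde\Gamma$ terms into contractions of $h$-covariant derivatives of $\tilde R$. Your closing caveat about whether curvature indices are raised by $h$ or by the $g_i$ is a fair reading of the statement, but it does not represent a divergence from the paper's method.
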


One way to prove this proposition is to work at the origin in normal coordinates for $h$.  Then all the coordinate derivatives in $T$ can be written in terms of covariant derivatives with respect to $h$.  In the process, covariant derivatives of the curvature of $h$ are also introduced.  See \cite{AtiyahBottPatodi} or \cite{Epstein} for more details surrounding natural tensors.

For our application, the background metric will coincide with one of the metrics defining the polynomial natural tensor in question.  Also, we note that if $W$ is a polynomial natural vector field, then
$\Lie{W}{g}$ is also a polynomial natural tensor.

Finally, while working with tensors in coordinates and to represent terms of order less than the principal part that appear in various equations, we will occasionally have a need to encapsulate terms depending on a certain number of derivatives of other tensors.  With this in mind, let $\sP(\partial^{p_1} s_1, \partial^{p_2} s_2,\ldots)$ denote an expression whose components are polynomials in the components of the tensors $s_r$ and their coordinate derivatives up to order $p_r$.

\subsection{The Ambient Obstruction Tensor}

For each even $n \geq 4$ the \textit{ambient obstruction tensor} of order $n$ is given by
\begin{equation} \label{obstructiondef}
\amb_n = \frac{1}{(-2)^{\frac{n}{2}-2} \left(\frac{n}{2} - 2\right)!}
		\left(\Delta^{\frac{n}{2}-1} P - \frac{1}{2(n-1)} \Delta^{\frac{n}{2}-2} \nabla^2 S \right) + T_{n-1}
\end{equation}
where
\begin{equation*}
P = \frac{1}{n-2} \left( Ric - \frac{1}{2(k-1)} S g \right)
\end{equation*}
is the Schouten tensor and $T_{n-1}$ is a polynomial natural tensor of order $n-1$, the specific structure of which we do not need.

The ambient obstruction tensor was introduced by Charles Fefferman and Robin Graham as the obstruction to a formal expansion of an asymptotically hyperbolic Einstein metric with a given conformal infinity in dimension $n+1$ \cite{FeffG}.  When $n = 4$, the ambient obstruction tensor is the Bach tensor.  In general, it is a polynomial natural tensor of order $n$ for a single metric in dimension $n$.  Moreover, it is symmetric, trace free, and divergence free.  Finally, it is conformally invariant with weight $2-n$, where we say a tensor $T$ is conformally invariant of weight $w$ if for $\hat{g} = \rho^2 g$, we have $\hat{T} = \rho^w T$, where $\rho$ is a smooth positive function.  See \cite{GHir} for more details.


\section{Parabolic Schauder theory} \label{schauder}

In this section we state the existence and regularity results for linear parabolic equations of order $2m$ on compact manifolds that we will need.  We review recent work of Tobias Lamm \cite{Lamm} for high-order parabolic equations that adapts to our setting.  

We first restrict to $\bR^n \times \bR_{+}$.  Define the parabolic distance between $(x_1, t_1)$ and $(x_2, t_2)$ by
\[ d( (x_1, t_1), (x_2, t_2)) = \max \{ |x_1-x_2|, |t_1-t_2|^{\frac{1}{2m}} \}. \]
For a smooth bounded open set $U  \subset \bR^n \times \bR_{+}$ and a function $u: \bR^n \times \bR_{+} \longrightarrow \bR^N$ define a seminorm:
\[ [u]_{\alpha, U} := \sup \left\{ \left. \frac{|u(p)-u(q)|}{d(p,q)^{\alpha}} \right| p, q \in U \; \mbox{and} \; p \neq q \right\}. \]
Now, for a non-negative integer $a$ and $b \in \{0, 1\}$ we define
\begin{align*}
C^{a,b}(U) &:= \left\{ u: U \rightarrow \bR^N | u, \nabla u, \cdots, \nabla^{a} u, \cdots, \partial_t^b u \in C^0(U)\right\}, \\
[D^{a,b} u]_{\alpha, U} &:= [\nabla^{a} u]_{\alpha, U} + [\partial^b_t u]_{\alpha, U}, \\
||u||_{C^{a,b;\alpha}(U)} &:= \sum_{i=0}^{a} ||\nabla^i u||_{L^{\infty}(U)} + \sum_{i=0}^{b}||\partial^i_t u||_{L^{\infty}(U)} + [D^{a,b} u]_{\alpha, U}, \\
C^{a,b;\alpha}(U) &:= \left\{ u \in C^{a,b}(U): ||u||_{C^{a,b;\alpha}(U)} < \infty \right\},
\end{align*}
where $\nabla$ may be interpreted as the covariant derivative with respect to the flat metric on $U$.  We note that the spaces $C^{0,0;\alpha}(U)$ and $C^{2m,1;\alpha}(U)$ are the basic spaces on which to prove parabolic Schauder estimates.  Our contraction mapping argument will occur on $C^{2m,0;\alpha}(U)$.  We also need purely spatial H\"older spaces, but we will not define these separately.

Consider a differential operator $L$ given coordinates $\{ x^1, \cdots, x^n, t \}$ by
\[ L = \sum_{|\beta| \leq 2m} a_{\beta} D^{\beta}, \]
where $\beta = (\beta_1, \cdots, \beta_n)$, $D^{\beta} = \partial^{\beta_1}_{x^1} \cdots \partial^{\beta_n}_{x^n}$, and $a_{\beta} = (a_{\beta}(x,t))^j_k$ for $1 \leq j, k \leq N$, where we emphasize that we are only allowing spatial derivatives.  We say $L$ is strongly elliptic if there is a constant $\Lambda > 0$ such that
\begin{equation} \label{strongel}
(-1)^m \sum_{|\beta| = 2m} (a_{\beta}(x,t))^j_k \xi^{\beta} \eta_j \eta^k \geq \Lambda |\xi|^{2m} |\eta|^2,
 \end{equation}
for all $(x,t) \in U$, $\xi \in \bR^n,$ and $\eta \in \bR^N$, and where the $(-1)^m$ prefactor comes from the fact that $D^{\beta}$ is mapped to $(i \xi)^{\beta}$ under the Fourier transform.  Note also the norms are with respect to the flat metric in coordinates.  We require $a^{\beta} \in C^{0,0;\alpha}$; in much of our application these coefficients are smooth and time-independent.  We say an operator $P = \partial_t + L$ is strongly parabolic if $L$ is strongly elliptic.

We now define H\"older spaces on bundles over compact manifolds modeled on our definition in Euclidean space.  Let $r$ be the injectivity radius of $M$.  Since $M$ is compact, $r > 0$.  Let $\{U_i \}$ be any (finite) covering by normal coordinate balls of radius less than $r$ with charts $\phi_i: U_i \rightarrow M$.  Let $\{\rho_i\}$ be a partition of unity subordinate to $\{ \phi_i( U_i ) \}$.  Given a section $u$ of $\Sigma^k(M) \times [0,T]$, we define a norm
\[ ||u||_{C^{2m,1;\alpha}(M \times [0,T])} := \sum_i || \phi_i^* (\rho_i u) ||_{C^{2m,1;\alpha}(U_i \times [0,T])}, \]
where the norm on the right hand side is computed with respect to the Euclidean metric, as above.  Let $C^{2m,1;\alpha}(M \times [0,T])$ be the completion of smooth sections with respect to this norm.  Note also that while $u$ is a section of a tensor bundle we will not explicitly mention this in the notation.  In the sequel we will call a choice of covering $(U_i, \phi_i, \rho_i)$ a reference cover.

It follows from the compactness of $M$ that the topology of $C^{2m,1;\alpha}(M \times [0,T])$ is well-defined as any two choices of reference cover for $M$ lead to equivalent norms.  We also document the mapping properties of the covariant derivative.  If $u$ is a symmetric $j$-tensor, then in any choice of coordinates, $\nabla^k u$ may be expressed as a differential operator of order $k$ acting on the components of $u$ with coefficients that depend on $k$-derivatives of the metric appearing through Christoffel symbols.  As the metric is smooth, any finite number of derivatives of the metric are uniformly bounded over charts in a reference cover.  This leads to the following
\begin{lemma} For any $1 < j \leq 2m$, we have bounded mappings
\[ \tilcov^j: C^{2m,1;\alpha}( M \times [0,T]) \longrightarrow C^{0,0;\alpha}( M \times [0,T]). \]
\end{lemma}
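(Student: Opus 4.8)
The plan is to read the statement as the assertion that $\tilcov^j$ is a differential operator of order $j \leq 2m$ whose coefficients, built out of the Christoffel symbols of $h$ and their derivatives, are smooth and hence---by compactness of $M$ and smoothness of $h$---uniformly bounded over the charts of any reference cover; the only real work is then to reconcile this coordinate picture with the chart-plus-partition-of-unity definition of the norms. First I would fix a reference cover $(U_i,\phi_i,\rho_i)$ and, invoking the already-noted equivalence of the norms attached to different reference covers, compute both $\|u\|_{C^{2m,1;\alpha}(M\times[0,T])}$ and $\|\tilcov^j u\|_{C^{0,0;\alpha}(M\times[0,T])}$ with this one cover. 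It then suffices to produce a constant $C$, independent of $i$ and of $u$, with $\|\phi_i^*(\rho_i \tilcov^j u)\|_{C^{0,0;\alpha}(U_i\times[0,T])} \leq C\,\|u\|_{C^{2m,1;\alpha}(M\times[0,T])}$ for every $i$, since there are finitely many charts.

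Next I would localize. In the $i$-th chart the components of $\phi_i^*(\tilcov^j u)$ are finite sums $\sum_{|\beta|\leq j} c_{i,\beta}\,\partial^\beta (\phi_i^* u)_I$ with each $c_{i,\beta}$ a polynomial in the Christoffel symbols of $h$ in these coordinates and their derivatives up to order $j-1$, hence smooth and uniformly bounded, together with all of its derivatives, over all charts; in particular the $c_{i,\beta}$ are uniformly bounded in $C^{0,0;\alpha}(U_i\times[0,T])$ (they do not depend on $t$). Choosing $\eta_i\in C_c^\infty(\phi_i(U_i))$ equal to $1$ on a neighborhood of $\operatorname{supp}\rho_i$, linearity of $\tilcov^j$ and the Leibniz rule give $\rho_i\,\tilcov^j u = \rho_i\,\tilcov^j(\eta_i u)$ (the terms carrying a derivative of $\eta_i$ vanish where $\eta_i\equiv 1$), so everything is expressed through the section $\phi_i^*(\eta_i u)$, which is compactly supported in $U_i$. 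Writing $\eta_i u = \sum_k (\eta_i\rho_k)\,u$---a finite sum, only the $k$ with $\operatorname{supp}\rho_k\cap\operatorname{supp}\eta_i\neq\emptyset$ contributing---and transporting each summand into the $i$-th chart through the transition map $\psi_{ik}=\phi_i^{-1}\circ\phi_k$, I would use that multiplication by a fixed smooth function and pull-back by a smooth diffeomorphism of open subsets of $\bR^n$ (acting only on the space variables, with all derivatives uniformly bounded since there are finitely many charts) are bounded operations on $C^{2m,1;\alpha}$---exactly the ingredient that makes the norm independent of the reference cover---to conclude $\|\phi_i^*(\eta_i u)\|_{C^{2m,1;\alpha}(U_i\times[0,T])} \leq C\,\|u\|_{C^{2m,1;\alpha}(M\times[0,T])}$.

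Finally I would finish on the fixed Euclidean domain $U_i\times[0,T]$: any differential operator of order $\leq 2m$ with coefficients bounded in $C^{0,0;\alpha}$ maps $C^{2m,1;\alpha}(U_i\times[0,T])$ boundedly into $C^{0,0;\alpha}(U_i\times[0,T])$. This rests on the Hölder product estimate $\|fg\|_{C^{0,0;\alpha}} \leq C\,\|f\|_{C^{0,0;\alpha}}\|g\|_{C^{0,0;\alpha}}$ together with the standard parabolic interpolation bound $\|\partial^\beta f\|_{C^{0,0;\alpha}(U_i\times[0,T])} \leq C\,\|f\|_{C^{2m,1;\alpha}(U_i\times[0,T])}$ for $|\beta|\leq 2m$---immediate when $|\beta|=2m$, and a consequence of interpolation between $\nabla^{2m}f$ and $f$ (as in Lamm's work) when $|\beta|<2m$. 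Applying this with $f=\phi_i^*(\eta_i u)$ and coefficients $(\rho_i\circ\phi_i)\,c_{i,\beta}$, then summing over the finitely many $i$, yields the claim. The hard part is not any single step but the bookkeeping that links them: the coordinate formula for $\tilcov^j$ lives in one chart while the norms are assembled from a partition of unity, so data must be moved between overlapping charts, and the fact that absorbs this is the boundedness of pull-back under the (smooth, bounded) transition maps---the very fact already used to make the norm well-defined. The same argument covers $j=0,1$ as well; the hypothesis $1<j\leq 2m$ simply records the range needed later.
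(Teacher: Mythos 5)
Your proof follows the same approach the paper sketches in the paragraph immediately preceding the lemma: write $\tilcov^j$ in reference coordinates as a differential operator of order $j$ whose coefficients are polynomials in the Christoffel symbols of $h$ and their derivatives, then invoke smoothness of $h$ and compactness of $M$ to get uniform bounds over the finitely many charts. The paper leaves it at that sketch, while you additionally spell out the bookkeeping (cutoffs, transition maps, product and interpolation estimates) needed to reconcile the coordinate expression with the partition-of-unity definition of the norms; this is correct and consistent with the paper's intent.
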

\begin{remark}
We caution the reader again that the notation we are using for the spaces does not keep track of the tensor order.
\end{remark}

A differential operator $L$ on a vector bundle over $M$ is strongly elliptic if given any choice of reference coordinates $(U_i, \phi_i)$, the operator $L_i = \phi_i^* \circ L \circ (\phi_i^{-1})^*$ is strongly elliptic.  We say $L$ has $C^{0,0;\alpha}$ coefficients if the coefficients of $L_i$ lie in $C^{0,0;\alpha}(U_i \times [0,T])$.

The basic a priori interior estimate for parabolic systems in Euclidean space is given in Lamm's work \cite[Satz 2.3.10, Satz 2.3.13]{Lamm}.  We state an existence result here for symmetric 2-tensors on compact manifolds without boundary, the proof is similar to the one given in \cite{Lamm}.

\begin{prop} \label{thm:linearparabolic} Let $L: C^{\infty}(\Sigma^2(M)) \rightarrow C^{\infty}(\Sigma^2(M))$ be a strongly elliptic differential operator with $C^{0,0;a}$ coefficients on a smooth compact manifold.  Then for every $f \in C^{0,0;\alpha}(M \times [0,T])$ there exists a unique solution $u \in C^{2m,1;\alpha}(M \times [0,T])$ of
\begin{equation*}
\left\{
\begin{aligned}
(\partial_t + L) u(x,t) &= f(x,t) \\
u(x,0) &= 0,
\end{aligned}
\right.
\end{equation*}
and we have the estimate
\[ ||u||_{C^{2m,1;\alpha}(M \times [0,T])} \leq K ||f||_{C^{0,0;\alpha}(M \times [0,T])}. \]
\end{prop}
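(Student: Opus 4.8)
The plan is to reduce the statement to the Euclidean a priori estimates of Lamm by means of the reference cover, and then to build the solution by a standard continuity/parameter-duality argument once the a priori estimate is in hand. First I would establish the a priori estimate
\[
\|u\|_{C^{2m,1;\alpha}(M \times [0,T])} \leq K \left( \|(\partial_t + L) u\|_{C^{0,0;\alpha}(M \times [0,T])} + \|u\|_{C^{0}(M\times[0,T])}\right)
\]
for any smooth $u$ vanishing at $t=0$. To do this, fix a reference cover $(U_i,\phi_i,\rho_i)$ and work on each chart: for $\phi_i^*(\rho_i u)$ one writes $(\partial_t + L_i)(\rho_i u) = \rho_i f_i + [\![L_i,\rho_i]\!] u$, where the commutator $[\![L_i,\rho_i]\!]$ is a spatial differential operator of order at most $2m-1$ whose coefficients are smooth, compactly supported, and uniformly bounded over the (finite) cover. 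Lamm's interior Schauder estimate \cite[Satz 2.3.10]{Lamm} applied on $U_i\times[0,T]$ then bounds $\|\phi_i^*(\rho_i u)\|_{C^{2m,1;\alpha}(U_i\times[0,T])}$ by $\|\rho_i f_i\|_{C^{0,0;\alpha}}$ plus the $C^{2m-1,1;\alpha}$-norm of $u$ on the slightly larger support region; summing over $i$ and using the definition of the manifold norm gives control by $\|f\|_{C^{0,0;\alpha}(M\times[0,T])} + \|u\|_{C^{2m-1,1;\alpha}(M\times[0,T])}$. An interpolation inequality on the parabolic H\"older scale then absorbs the lower-order term into $\varepsilon\|u\|_{C^{2m,1;\alpha}} + C_\varepsilon \|u\|_{C^0}$, yielding the displayed estimate.

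Next I would upgrade this to the clean estimate $\|u\|_{C^{2m,1;\alpha}} \leq K\|f\|_{C^{0,0;\alpha}}$ with the $\|u\|_{C^0}$ term removed. Since $u(\cdot,0)=0$, for any $t\le T$ one has $\|u(\cdot,t)\|_{C^0} \le t\,\sup_{[0,t]}\|\partial_t u\|_{C^0} \le T\|u\|_{C^{2m,1;\alpha}(M\times[0,T])}$; this is not directly strong enough, but the standard device is to partition $[0,T]$ into finitely many subintervals of length $\tau$ with $\tau$ chosen so that $K'\tau < 1/2$, apply the estimate-with-lower-order-term on each subinterval (using the value at the left endpoint as initial data and treating the resulting inhomogeneity carefully), and iterate; alternatively one works on $[0,T']$ for small $T'$ first and then reglues. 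The conclusion is the bound $\|u\|_{C^{2m,1;\alpha}(M\times[0,T])}\le K\|f\|_{C^{0,0;\alpha}(M\times[0,T])}$ valid for all smooth $u$ with $u(\cdot,0)=0$, which in particular gives uniqueness.

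For existence, I would use the method of continuity: connect $L$ to a model operator $L_0 = (-1)^{m+1}\tilde\Delta^m$ (acting componentwise, which is manifestly strongly elliptic on $\Sigma^2(M)$ and for which existence is classical, e.g.\ by spectral theory or directly from \cite[Satz 2.3.13]{Lamm}) via the family $L_\sigma = (1-\sigma)L_0 + \sigma L$, $\sigma\in[0,1]$. Each $L_\sigma$ is strongly elliptic with $C^{0,0;\alpha}$ coefficients — the ellipticity constant $\Lambda$ is bounded below uniformly in $\sigma$ by convexity of the ellipticity condition \eqref{strongel}, and the H\"older norms of the coefficients are uniformly bounded — so the a priori estimate above holds with a single constant $K$ independent of $\sigma$. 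Since the set of $\sigma$ for which $\partial_t + L_\sigma: \{u\in C^{2m,1;\alpha}: u(\cdot,0)=0\}\to C^{0,0;\alpha}$ is surjective is both open (by the uniform a priori bound and a Neumann-series perturbation) and closed (by the uniform a priori bound again), and contains $\sigma=0$, it contains $\sigma=1$, giving the desired solution $u\in C^{2m,1;\alpha}(M\times[0,T])$ together with the estimate.

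The main obstacle I anticipate is the passage from Lamm's Euclidean interior estimates to a clean global estimate on the compact manifold: one must control the commutator terms coming from the cutoffs $\rho_i$ and, more delicately, handle the fact that Lamm's estimates are genuinely interior (losing a collar near $\partial U_i$), which forces the use of a cover by balls of two nested radii and a careful interpolation argument to absorb lower-order norms — together with the subinterval iteration needed to drop the $\|u\|_{C^0}$ term. The transition functions between overlapping charts and the metric-dependent coefficients in $\tilde\nabla^j$ also need to be shown uniformly bounded, but this follows from smoothness of $h$ and compactness of $M$ as already noted in the excerpt.
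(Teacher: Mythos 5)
The paper does not actually write out a proof of this proposition; it states the result and says ``the proof is similar to the one given in [Lamm],'' so your job was to supply the details, and the route you take (globalize Lamm's Euclidean interior Schauder estimates via a reference cover and partition of unity, absorb commutator and lower-order terms by interpolation and by subdividing the time interval using $u(\cdot,0)=0$, then run the method of continuity against a constant-type model operator) is exactly the standard implementation of that reference and is sound. One small slip: with the paper's sign convention \eqref{strongel}, the equation is $\partial_t u + L u = f$, so a forward-parabolic model requires $L_0 = (-1)^{m}\tilde\Delta^m$, not $(-1)^{m+1}\tilde\Delta^m$ (for $m=1$ this should reduce to $\partial_t - \tilde\Delta$; your choice gives the backward heat operator, which is not strongly elliptic in the sense of \eqref{strongel}). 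With that sign corrected, your continuity family $L_\sigma = (1-\sigma)L_0 + \sigma L$ is uniformly strongly elliptic by convexity as you say, and the argument goes through.
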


We remark that the Schauder constant $K$ remains bounded as $T \rightarrow 0$.

Finally, we state a version of parabolic regularity.  See Friedman \cite{Friedman} or Krylov \cite{Krylov} for how to translate interior estimates (for second order parabolic operators) to regularity results.

\begin{prop} \label{thm:parabolicregularity} Let $L: C^{\infty}(\Sigma^2(M)) \rightarrow C^{\infty}(\Sigma^2(M))$ be a strongly elliptic differential operator such in any references coordinates $||D^{\gamma} a_{\beta}||_{C^{0,0;\alpha}(M \times [0,T])} \leq C$, for any multi-index $\gamma$ with $|\gamma| \leq k$.  Suppose also $D^{\gamma} f \in C^{0,0;\alpha}(M \times [0,T])$ for all $|\gamma| \leq k$ and that $u \in C^{2m,1;\alpha}(M \times [0,T])$ is a solution of $(\partial_t + L) u(x,t) = f(x,t)$ on $M \times [0,T]$.  Then $D^{\gamma} u \in C^{2m,1;\alpha}(M \times [0,T])$, for all $|\gamma| \leq k$.
\end{prop}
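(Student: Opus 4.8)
\medskip
\noindent\textbf{Proof plan.} The plan is to run the standard difference-quotient bootstrap for parabolic regularity --- as in Friedman \cite{Friedman} or Krylov \cite{Krylov} for second-order scalar equations --- with Lamm's higher-order interior a priori Schauder estimates \cite{Lamm} in place of the classical ones. The argument is an induction on $|\gamma|$: the base case $|\gamma| = 0$ is the hypothesis $u \in C^{2m,1;\alpha}$, and each step gains one more spatial derivative. We differentiate only in the spatial directions, since the hypotheses control only spatial derivatives of $f$ and of the coefficients of $L$, and we work chart by chart in a reference cover, where $u$ becomes a vector-valued function, $L$ a matrix-valued operator, and ``strongly elliptic'' means exactly the Euclidean condition \eqref{strongel} for the chart representative.

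\medskip
\noindent\emph{The one-derivative estimate.} The core is the following local statement in $\bR^n \times [0,T]$: if $w \in C^{2m,1;\alpha}$ solves $(\partial_t + L)w = G$ on $B_2 \times [0,T]$ with $L$ strongly elliptic and $a_\beta, \partial_\ell a_\beta, \partial_\ell G \in C^{0,0;\alpha}(B_2 \times [0,T])$, then $\partial_\ell w \in C^{2m,1;\alpha}(B_1 \times [0,T])$ and solves $(\partial_t + L)(\partial_\ell w) = \partial_\ell G - \sum_\beta (\partial_\ell a_\beta) D^\beta w$. To prove it, let $\delta_\ell^h$ be the difference quotient in the $x^\ell$-direction; for $|h|$ small a direct computation gives $(\partial_t + L^h)(\delta_\ell^h w) = \delta_\ell^h G - \sum_\beta (\delta_\ell^h a_\beta) D^\beta w$ on $B_{3/2} \times [0,T]$, where $L^h$ is $L$ with coefficients shifted by $h e_\ell$. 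Then $L^h$ is strongly elliptic with the same constant and has coefficients bounded in $C^{0,0;\alpha}$ uniformly in $h$, while the right-hand side is bounded in $C^{0,0;\alpha}$ uniformly in $h$ because $\|\delta_\ell^h G\|_{C^{0,0;\alpha}} \le \|\partial_\ell G\|_{C^{0,0;\alpha}}$, $\|\delta_\ell^h a_\beta\|_{C^{0,0;\alpha}} \le \|\partial_\ell a_\beta\|_{C^{0,0;\alpha}}$, and $D^\beta w \in C^{0,0;\alpha}$ for $|\beta| \le 2m$ (the last from $w \in C^{2m,1;\alpha}$ together with the standard interpolation inclusions for these anisotropic Hölder spaces). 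Lamm's interior estimate then bounds $\|\delta_\ell^h w\|_{C^{2m,1;\alpha}(B_1\times[0,T])}$ by $\|\delta_\ell^h w\|_{C^0(B_{3/2}\times[0,T])}$ plus the $C^{0,0;\alpha}(B_{3/2}\times[0,T])$ norm of the right-hand side, hence uniformly in $h$. Sending $h \to 0$ and using the compact embedding of $C^{2m,1;\alpha}$ into weaker spaces on the relatively compact domain, $\delta_\ell^h w \to \partial_\ell w$ with the bound passing to the limit, so $\partial_\ell w \in C^{2m,1;\alpha}(B_1 \times [0,T])$; passing to the limit in the difference-quotient identity yields its equation.

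\medskip
\noindent\emph{Induction and patching.} Suppose $D^{\gamma''}u \in C^{2m,1;\alpha}$ for all $|\gamma''| \le r < k$; fix $|\gamma| = r+1$ and write $\gamma = \gamma' + e_\ell$. In a reference chart, $v := D^{\gamma'}u \in C^{2m,1;\alpha}$ solves $(\partial_t + L)v = f_{\gamma'}$, where $f_{\gamma'} = D^{\gamma'}f - \sum_\beta\sum_{0<\delta\le\gamma'}\binom{\gamma'}{\delta}(D^\delta a_\beta)(D^{\gamma'-\delta+\beta}u)$. One checks $\partial_\ell f_{\gamma'} \in C^{0,0;\alpha}$: after differentiating, every term is either $D^\gamma f$ with $|\gamma| = r+1 \le k$, or a product of some $D^\mu a_\beta$ with $|\mu| \le r+1 \le k$ (hence in $C^{0,0;\alpha}$) with a $D^\beta$ of some $D^\nu u$ having $|\nu| \le r$ and $|\beta| \le 2m$ (hence in $C^{0,0;\alpha}$ by the inductive hypothesis). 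The one-derivative estimate applied to $v$, with right-hand side $f_{\gamma'}$, then gives $\partial_\ell v = D^\gamma u \in C^{2m,1;\alpha}$ on the smaller chart balls; covering $M$ by finitely many such balls and recalling that the norm on $M \times [0,T]$ is defined through a reference cover, we conclude $D^\gamma u \in C^{2m,1;\alpha}(M\times[0,T])$, completing the induction.

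\medskip
\noindent\emph{Main obstacle.} I expect the only genuine subtlety to be reaching the initial slice $t = 0$: a bare interior parabolic estimate controls $\delta_\ell^h w$ only for $t$ bounded away from $0$, and the initial trace $\delta_\ell^h w(\cdot,0)$ is bounded in $C^{j;\alpha}$ spatially only when $w(\cdot,0)$ is more regular than $C^{2m;\alpha}$. In the intended applications the initial metric is smooth, so all these initial traces are uniformly bounded in every $C^{j;\alpha}$ and the estimate extends up to $t=0$; absent that, one reads the conclusion on $M \times (0,T]$. The remaining items --- that Lamm's scalar Euclidean estimates extend verbatim to matrix-valued operators, and that no step above invokes a derivative of $f$ or of a coefficient of order exceeding $k$ --- are routine.
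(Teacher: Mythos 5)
The paper does not actually prove this proposition; it only cites Friedman and Krylov for the analogous second-order translation from interior estimates to regularity, and states the higher-order version. Your difference-quotient bootstrap is precisely the standard argument those references carry out (adapted here to $2m$-th order systems using Lamm's interior Schauder estimate in place of the classical one), so your proposal fills in what the paper leaves to the reader, and the computations---the commutator identity $(\partial_t + L^h)(\delta_\ell^h w) = \delta_\ell^h G - \sum_\beta (\delta_\ell^h a_\beta)\, D^\beta w$, the Leibniz bookkeeping in the induction, and the chart-by-chart patching---are all correct.

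Your ``main obstacle'' remark is a genuine and worthwhile observation, not a routine caveat: as stated, the proposition does not hypothesize any extra regularity of $u(\cdot,0)$ beyond what $u \in C^{2m,1;\alpha}(M\times[0,T])$ forces, and a $C^{2m;\alpha}$ initial trace alone cannot guarantee that $D^{\gamma}u$ remains $C^{2m,1;\alpha}$ all the way down to $t=0$. The interior estimate gives the conclusion on $M\times(0,T]$, and reaching the initial slice requires either the compatibility furnished by smooth initial data (as in the paper's application, where $v(\cdot,0)=0$) or an explicit assumption on the initial trace. So while the paper's use of the proposition is sound, the proposition itself is stated a bit more generously than the cited method delivers, and you were right to flag it.
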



\section{The Main Theorem} \label{mainthm}

In this section we restate and prove our main theorem.  We are interested in studying the flow
\begin{equation*} 
\left\{ \begin{aligned} 
\dvar{t} g &= T \\
g(0) &= h
\end{aligned} \right.
\end{equation*}
where $T$ is a polynomial natural tensor of order $2m$.  Like the Ricci flow, this system may not be parabolic due to the diffeomorphism invariance of $T$.  We therefore consider a generalized Ricci-DeTurck trick.  We instead study
\begin{equation*} 
\left\{ \begin{aligned} 
\dvar{t} g &= T + \Lie{W}{g}\\
g(0) &= h
\end{aligned} \right.
\end{equation*}
where $W$ is a polynomial natural vector field for the two metrics $g$ and $h$, chosen so that the linearization of $T(g) + \Lie{W}{g}$ at $h$ is strongly elliptic.   

Our theorem is as follows:

\medskip
\noindent \textbf{Theorem \ref{maintheorem}.} \emph{Let $T$ be a polynomial natural symmetric two-tensor for a single metric $g$ in dimension $n$ with the property that there is a polynomial natural vector field $W$ for two metrics $g$ and $h$ in dimension $n$, such that the linearization of $T+ \Lie{W}{g}$ at any metric $h$ is a strongly elliptic linear operator of order $2m$.  Then, on any compact manifold of dimension $n$ and any initial metric $h$, there is a smooth short-time solution to
\begin{equation}
\tag{$\ast$}
\left\{ \begin{aligned} 
\dvar{t} g &= T \\
g(0) &= h.
\end{aligned} \right.
\end{equation}
}

\medskip

The adjustment by the Lie derivative allows us to use the DeTurck trick and the form of the tensor allows us to apply Schauder theory and a fixed point argument to prove existence.  Applying this theorem then requires finding an appropriate vector field with which to take the Lie derivative, and verifying that the adjusted tensor is of the right form.

The proof of Theorem \ref{maintheorem} will proceed in three main parts.  First we will show that the adjusted equation
\begin{equation} \label{adjustedflow}
\left\{ \begin{aligned} 
\dvar{t} g &= T + \Lie{W}{g} \\
g(0) &= h
\end{aligned} \right.
\end{equation}
can be conditioned so that all derivatives are expressed as covariant derivatives with respect to $h$.  This will introduce an inhomogeneous part.  Second, we will use the conditioned form of the adjusted equation to show that it has a solution for a short time in an anisotropic H\"older space with finite regularity.  After a bootstrap argument to obtain full smoothness, we will apply the DeTurck trick to prove that the original system has a solution.  The second part itself requires a number of steps.  We will find a solution to the linear part using Schauder theory for higher-order systems.  Then we will show, still using the Schauder estimates that the quadratic part can be made ``small'' by choosing a small enough neighborhood in the appropriate function space.  Finally we will deal with the inhomogeneous part by restricting to a short time interval.  Applying a contraction mapping argument, we will find a fixed point of the solution operator defined by the linear part.  This fixed point will be the desired solution.


\subsection{Conditioning the Adjusted Flow}

The first step to conditioning the adjusted flow is to invoke Proposition \ref{invariantsWRTh}.  This says we can write $T+ \Lie{W}{g}$ in terms of covariant derivatives of $g$ with respect to $h$, $g^{-1}$ and $h$ and its derivatives.

We now condition the equation further.  We will search for a solution to \eqref{adjustedflow} as a perturbation of the initial metric, i.e. in the form $g = h + v$.

We introduce the notation $\sT(g) := T(g) + \Lie{W}{g}$.  We form a Taylor expansion in $s$ for the expression $\sT(h + sv)$ about $s = 0$.  Define
\begin{align*}
L_h v &:= \frac{d}{ds} \left( \sT(h+sv)  \right) \big|_{s = 0}, \; \mbox{and} \\
Q( v, s) &:= \frac{1}{2} \frac{d^2}{d^2s} \left( \sT(h+sv) \right), \; \mbox{and}\\
\sQ(v) & := \int_0^1 Q( v, z ) dz.
\end{align*}
Now consider 
\begin{align*} 
\sT(h+sv) &= I_h + L_h v \cdot s + \int_0^1 Q( v, zs ) dz \cdot s^2,
\end{align*} 
where we have used the integral form of the remainder for the second order terms, and $I_h$ denotes the `inhomogeneous' term $I_h := \sT(h)$.  

Looking for a solution to \eqref{adjustedflow} of the form $g(x,t) = h(x) + v(x,t)$ then amounts to solving

\begin{equation} \label{eqn:eqnforv-1}
\left\{ \begin{aligned} 
\dvar{t} v &= \sT(h + v) = I_h + L_h v + \sQ(v), \\
v(0) &= 0.
\end{aligned} \right.
\end{equation}

We pause to remark at this point that for our eventual geometric applications, that is flow by powers of the Laplacian of the Ricci tensor and flow by the ambient obstruction tensors, it is possible to work out the expansion above quite explicitly once one has an expansion for the inverse of the evolving metric.  Indeed, our work was motivated by these expansions.

We next need to understand the mapping properties of the quadratic remainder term more closely.  We work in the space $C^{2m,0;\alpha}$.  Note that in our eventual application we will need this lemma for $v$ with very small norm.

\begin{lemma} There is a positive constant $C$ depending on $\sQ$ such that for $u, v \in C^{2m,0;\alpha}(M \times [0,T])$, with 
\[ ||u||_{C^{2m,0;\alpha}(M \times [0,T])}, ||v||_{C^{2m,0;\alpha}(M \times [0,T])} < 1, \]
we have the following
\begin{enumerate}
\item $\sQ$ satisfies the following quadratic estimate:
\begin{equation*} 
|| \sQ(v) ||_{C^{0,0;\alpha}(M \times [0,T])} \leq C ||v||^2_{C^{2m,0;\alpha}(M \times [0,T])}. 
\end{equation*}
\item $Q$ satisfies the following `difference of squares' Lipschitz-type estimate:
 \begin{align*} 
|| \sQ(u) &- \sQ(v) ||_{C^{0,0;\alpha}(M\times[0,T])} \nonumber \\
&\leq C \max\left\{ ||u||_{C^{2m,0;\alpha}(M \times [0,T])}, ||v||_{C^{2m,0;\alpha}(M \times [0,T])} \right\} || u - v ||_{C^{2m,0;\alpha}(M \times [0,T])}. 
\end{align*}
\end{enumerate}
Moreover, $C$ remains bounded as $T \rightarrow 0$.
\end{lemma}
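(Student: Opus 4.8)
\noindent\emph{Proof proposal.}
The plan is to extract from Proposition~\ref{invariantsWRTh} a structural description of $\sQ$ that makes the quadratic nature of the two estimates transparent, and then to combine it with the nonlinear mapping properties of the anisotropic H\"older spaces. First I would apply Proposition~\ref{invariantsWRTh}: after conditioning with respect to $h$, and since $\tilcov h = 0$, the tensor $\sT(g) = T(g) + \Lie{W}{g}$ is, in each chart of a reference cover, a universal polynomial in the components of $\tilcov^{j} g$ for $0 \le j \le 2m$ and of $g^{-1}$, with coefficients that are polynomials in the components of $h$, $h^{-1}$, $\widetilde{Rm}$ and finitely many of its $\tilcov$-derivatives --- all smooth, hence uniformly bounded over the cover. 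Setting $g = h+sv$, the only $s$-dependence enters through $\tilcov^{j}(h+sv) = \tilcov^{j}h + s\,\tilcov^{j}v$, which is affine in $s$, and through $(h+sv)^{-1}$, whose $s$-derivative is $-(h+sv)^{-1} v\, (h+sv)^{-1}$. Differentiating twice in $s$ and integrating over $[0,1]$ then shows
\[
\sQ(v) = \int_0^1 \sP\big(\tilcov^{2m}v,\ \ldots,\ v\,;\ (h+zv)^{-1}\,;\ h,\ \widetilde{Rm}\big)\, dz,
\]
where $\sP$ is a finite sum of monomials, each the product of boundedly many factors drawn from $\{\tilcov^{j}v : 0 \le j \le 2m\}$, $\{\tilcov^{j}(h+zv) : 0 \le j \le 2m\}$ and $\{(h+zv)^{-1}\}$ (times a bounded coefficient), \emph{and each containing at least two factors of the first, ``$v$-type''}. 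The number of monomials and of factors per monomial is controlled by $\sQ$ alone.

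Next I would assemble the H\"older inputs. The space $C^{0,0;\alpha}(M \times [0,T])$ is a Banach algebra with multiplication constant independent of $T$; together with the mapping properties of $\tilcov$ recorded above and their evident variant $\tilcov^{j} : C^{2m,0;\alpha}(M\times[0,T]) \to C^{0,0;\alpha}(M\times[0,T])$ for $j \le 2m$ (using that the Christoffel symbols of $h$ are smooth and time-independent), this bounds the $C^{0,0;\alpha}$-norm of each $v$-type factor by $\|v\|_{C^{2m,0;\alpha}(M\times[0,T])}$. For $\|v\|_{C^{2m,0;\alpha}} < 1$ the tensor $h+zv$ remains uniformly positive definite for all $z \in [0,1]$ (this is where the smallness of $v$ enters; $h$ is a fixed smooth metric on the compact $M$), so $\det(h+zv)$ is bounded below and $(h+zv)^{-1}$, being the adjugate divided by the determinant, has $C^{0,0;\alpha}$-norm bounded in terms of $h$; likewise $\|\tilcov^{j}(h+zv)\|_{C^{0,0;\alpha}} \le \|\tilcov^{j}h\|_{C^{0,0;\alpha}} + 1$. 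Estimate~(1) then follows by bounding each monomial of the integrand by the product of its factors' norms, retaining two $v$-type factors to produce $\|v\|^2_{C^{2m,0;\alpha}}$ and absorbing any further $v$-type factors (each $\le 1$) into the constant; integrating in $z$ gives $\|\sQ(v)\|_{C^{0,0;\alpha}} \le C\|v\|^2_{C^{2m,0;\alpha}}$.

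For estimate~(2) I would telescope each monomial of the integrand. Writing it as $\prod_i a_i$,
\[
\prod_i a_i(u) - \prod_i a_i(v) = \sum_k \Big( \prod_{i<k} a_i(u) \Big) \big( a_k(u) - a_k(v) \big) \Big( \prod_{i>k} a_i(v) \Big),
\]
and I would estimate each difference: for a $v$-type factor, $a_k(u) - a_k(v) = \tilcov^{j}(u-v)$, whose $C^{0,0;\alpha}$-norm is controlled by $\|u-v\|_{C^{2m,0;\alpha}}$; for $(h+z\,\cdot)^{-1}$ the identity $(h+zu)^{-1} - (h+zv)^{-1} = (h+zu)^{-1}\big(z(v-u)\big)(h+zv)^{-1}$ gives the same up to $h$-dependent factors; and for $\tilcov^{j}(h+z\,\cdot)$ with $j \ge 1$ the difference is $z\,\tilcov^{j}(u-v)$. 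In every case the remaining product still contains at least one $v$-type factor --- of norm at most $\max\{\|u\|_{C^{2m,0;\alpha}}, \|v\|_{C^{2m,0;\alpha}}\}$ --- and all its other factors are bounded in terms of $h$ and are $\le 1$. Summing over $k$, over the monomials of $\sP$, and integrating in $z$ produces the stated Lipschitz-type bound. Finally, every constant appearing above --- the Banach-algebra constant, the operator norms of the $\tilcov^{j}$, the uniform bounds on and positivity of $h$, and the combinatorial factors --- is stable as $T \to 0$, so $C$ is as well.

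The main obstacle will be the structural claim of the first paragraph: that two $s$-differentiations of $\sT(h+sv)$ leave every surviving term with at least two $v$-type factors. This is precisely what makes the estimates quadratic rather than merely continuous, and it hinges on Proposition~\ref{invariantsWRTh} isolating $(h+v)^{-1}$ as the sole non-polynomial dependence on $v$. The remainder --- the Banach-algebra property of the anisotropic H\"older space and the stability of matrix inversion within it --- is routine, but will need some care because of the mixed parabolic scaling in $d$ and the chart-wise definition of the norms on $M \times [0,T]$.
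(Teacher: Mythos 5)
Your proof is correct and follows essentially the same route as the paper: both expand $\sT(h+sv)$ via the structure guaranteed by Proposition~\ref{invariantsWRTh}, observe that two $s$-differentiations leave every surviving monomial with at least two ``$v$-type'' factors, then close the estimate with the product structure of the anisotropic H\"older norm and a telescoping (interpolation) argument for the Lipschitz bound. The one place where you go further than the paper is in the handling of $g^{-1} = (h+sv)^{-1}$: the paper's write-up groups the $s$-dependence into a coefficient $c(h,h^{-1},s)$ described as polynomial in $s$, which as written does not account for the rational dependence coming from the inverse; your version keeps $(h+zv)^{-1}$ explicit, invokes $\|v\| < 1$ precisely to guarantee $h+zv$ remains uniformly positive definite so the adjugate-over-determinant formula gives a uniform $C^{0,0;\alpha}$ bound, and uses the resolvent identity $(h+zu)^{-1}-(h+zv)^{-1}=z(h+zu)^{-1}(v-u)(h+zv)^{-1}$ in the telescoping for part~(2). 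This is the more careful account of why the smallness hypothesis and the bound on $T\to 0$ hold, and it correctly allows $j,k$ up to $2m$ in the factors $\tilcov^j v$, whereas the paper's schematic ``$0 \le j,k < 2m$'' is slightly too restrictive. None of this changes the argument's architecture; it is a more rigorous rendering of the same proof.
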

\begin{proof}
We first observe that $\sT$ contains at most the $2m$-th $h$-covariant derivative of $v$, which is purely a spatial derivative.  By the special form of the expansion above, every term of $Q(v,s)$ is quadratic or higher in $v$ and its $h$-covariant derivatives up to order $2m$.  We begin by restricting to the quadratic case
\[ c(h, h^{-1}, s) \tilcov^j v \tilcov^k v, \]
for $0 \leq j, k < 2m $, and the dependence on $s$ is polynomial. 

A term of this form may be estimated by
\begin{align*}
|| c(h, &h^{-1}, s) \tilcov^j v \tilcov^k v ||_{C^{0,0;\alpha}(M \times [0,T])} \\
&\leq C(s) ||\tilcov^j v||_{C^{0,0;\alpha}(M \times [0,T])} || \tilcov^k v ||_{C^{0,0;\alpha}(M \times [0,T])} \\
& \leq C(s) ||v||_{C^{2m,0;\alpha}(M \times [0,T])}^2,
\end{align*}
where the constant $C$ depends on $h$ and is polynomial in $s$.  For $\sQ(v)$, we now estimate through the integral and find
\begin{align*}
|| \sQ(v)  ||_{C^{0,0;\alpha}(M \times [0,T])}
& \leq \int_0^1 || Q(v, s) ||_{C^{0,0;\alpha}(M \times [0,T])} ds \\
& \leq C'  ||v||_{C^{2m,0;\alpha}(M \times [0,T])}^2,
\end{align*}
where we have used the fact that the integral in $s$ will be bounded as the dependence in $s$ is polynomial and $C'$ now depends on the algebraic structure, for example the number of terms, occurring in the expression for $\sQ$.  

The terms with higher powers of $v$ may be estimated in a similar manner.  This proves the first estimate. 

The second estimate follows from the fact that polynomials are locally Lipschitz.  To describe this a little further, returning to the special structure of the terms we consider the difference
\[ c(h, h^{-1}, s) \tilcov^j u \tilcov^k u - c(h, h^{-1}, s) \tilcov^j v \tilcov^k v  \]
for $0 \leq j, k < 2m $.  A term of this form may be estimated by by interpolating appropriate terms so as to form the difference $u - v$.  For example, we may abstractly write
\begin{align*}
c(h, h^{-1}, s) \tilcov^j u \tilcov^k u &- c(h, h^{-1}, s) \tilcov^j v \tilcov^k v \\
&= c(h, h^{-1}, s) \tilcov^j u \tilcov^k u - c(h, h^{-1}, s) \tilcov^j u \tilcov^k v \\
&+ c(h, h^{-1}, s) \tilcov^j u \tilcov^k v - c(h, h^{-1}, s) \tilcov^j v \tilcov^k v \\
&= c(h, h^{-1}, s) \tilcov^j u (\tilcov^k u - \tilcov^k v) + c(h, h^{-1}, s) (\tilcov^j u - \tilcov^j v) \tilcov^k v
\end{align*}

We now estimate as before.  The fact that the constant $C$ remains bounded as $T \rightarrow 0$ follows from the fact that the only time dependence in $\sQ$ occurs though its arguments.
\end{proof}


\subsection{Solving the Conditioned Flow}

We now set up and the short-time existence for a certain class of flows.  Our basic equation for a symmetric two tensor may be written

\begin{equation}  \label{eqnforv}
\left\{ \begin{aligned}
\partial_t v + L v &= F(v, \tilcov v, \cdots,  \tilcov^{2m} v) + I, \\
		v(0) &= 0. 
\end{aligned} \right.
\end{equation}

where $L$ is strongly elliptic operator of order $2m$ with smooth coefficients that are independent of $t$, $I: \Sigma^2(M) \longrightarrow \Sigma^2(M)$ is a smooth map independent of time that acts as an inhomogeneous term, and $F(v) \;( = F(v, \tilcov v, \cdots,  \tilcov^{2m} v) )$ is a smooth function that satisfies the following properties: there exists a constant $C_F > 0$ such that
\begin{enumerate}
\item $F$ satisfies the following quadratic estimate:
\[ || F(v, \tilcov v, \cdots, \tilcov^{2m} v)||_{C^{0,0;\alpha}(M \times [0,T])} \leq C_F ||v||^2_{C^{2m,0;\alpha}(M \times [0,T])} \] 
\item $F$ satisfies the following `difference of squares' Lipschitz type-estimate:
\begin{align*} || F(&u, \tilcov u, \cdots, \tilcov^{2m} u)-F(v, \tilcov v, \cdots, \tilcov^{2m} v) ||_{C^{0,0;\alpha}(M \times [0,T])} \\ 
&\leq C_F \max\left\{ ||u||_{C^{2m,0;\alpha}(M \times [0,T])}, ||v||_{C^{2m,0;\alpha}(M \times [0,T])} \right\}\! ||u - v||_{C^{2m,0;\alpha}(M \times [0,T])}. \end{align*}
\end{enumerate}

We now explain the contraction mapping argument that leads to short-time existence for \eqref{eqnforv}.  Using Duhamel's principle we may rewrite \eqref{eqnforv} as an explicit integral equation involving the heat kernel $H$ for $L$:
\begin{equation} \label{eqn:integral-eqn-for-v}
 v(t) = \underbrace{\int_0^t H(t-s) (F(v) + I) ds}_{ := \Psi( v ) }. 
\end{equation}
Note the definition of the map $\Psi$ in this equation.

For positive parameters $\mu$ and $T$ to be specified define a subspace $\sZ_{\mu,T}$ of $C^{2m,0;\alpha}(M \times [0,T])$ by
\[ \sZ_{\mu,T} = \left\{ u \in C^{2m,0;\alpha}(M \times [0,T]): u(x,0) = 0, ||u||_{C^{2m,0;\alpha}(M \times [0,T])} \leq \mu. \right\}. \]
This is a closed subset of the Banach space $C^{2m,0;\alpha}(M \times [0,T])$.

We now check the basic mapping property of $\Psi$.  Suppose that $u \in \sZ_{\mu,T}$, and set $v = \Psi u$.  Now $v$ satisfies
\begin{equation*}
\left\{ \begin{aligned}
\partial_t v + L v &= F(u) + I, \\
		v(0) &= 0. 
\end{aligned} \right.
\end{equation*}
By the mapping properties of $F$, $F(u)+I \in C^{0,0;\alpha}(M\times[0,T])$.  Consequently the Schauder estimate implies $v \in C^{2m,1;\alpha}(M \times [0,T]) \subset C^{2m,0;\alpha}(M \times [0,T])$.  This proves
\[ \Psi: \sZ_{\mu,T} \longrightarrow C^{2m,0;\alpha}(M \times [0,T]). \]

We now prove that $\Psi$ is in fact an automorphism of $\sZ_{\mu,T}$ for $\mu$ and $T$ small enough. 

\begin{lemma} $\Psi$ is an automorphism for $\mu$ and $T$ sufficiently small.
\end{lemma}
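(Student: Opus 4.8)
The plan is to show that for suitably chosen $\mu$ and $T$ the map $\Psi$ sends $\sZ_{\mu,T}$ into itself and is a contraction there; the contraction mapping theorem then yields a unique fixed point, and since $\Psi$ maps into $C^{2m,1;\alpha}$ (by the Schauder estimate) this fixed point automatically lies in $\sZ_{\mu,T}$, giving the claimed ``automorphism'' (here meaning self-map with a unique fixed point). The two inputs we lean on are Proposition \ref{thm:linearparabolic} (giving the Schauder constant $K$, bounded as $T \to 0$) and the quadratic/Lipschitz estimates (1) and (2) on $F$.

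First I would establish the self-mapping property. For $u \in \sZ_{\mu,T}$, write $v = \Psi u$, so $v$ solves $\partial_t v + L v = F(u) + I$ with $v(0) = 0$. By estimate (1), $\|F(u)\|_{C^{0,0;\alpha}} \le C_F \mu^2$, and $\|I\|_{C^{0,0;\alpha}}$ is some fixed constant $C_I$ (in fact we can make the $C^{0,0;\alpha}$-norm of $I$ over $M \times [0,T]$ small by shrinking $T$, since $I$ is smooth and time-independent — more precisely, restricting the time interval controls the sup-norm contribution, though the spatial H\"older seminorm of $I$ is $T$-independent; one should be a little careful here and perhaps instead absorb $I$ differently, e.g.\ by noting $\|I\|_{C^{0,0;\alpha}(M\times[0,T])}$ is bounded uniformly in $T \le 1$). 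Then the Schauder estimate gives
\[
\|v\|_{C^{2m,0;\alpha}} \le \|v\|_{C^{2m,1;\alpha}} \le K\bigl( C_F \mu^2 + \|I\|_{C^{0,0;\alpha}} \bigr).
\]
This does not obviously close up, so the key observation — and I suspect this is where the argument must be done with care — is that one needs a version of the estimate in which the right-hand side carries a factor that is small for small $T$. One standard route: the solution operator $\int_0^t H(t-s)(\cdot)\,ds$ applied to a time-independent or bounded source produces output whose $C^{2m,0;\alpha}$-norm is $O(T^{\delta})$ for some $\delta > 0$ (a gain coming from integrating the heat semigroup over a short interval), so that $\|v\|_{C^{2m,0;\alpha}} \le K T^\delta (C_F\mu^2 + C_I)$. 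Given such a gain, pick $\mu$ first with $K C_F \mu \le \tfrac14$ (say), then choose $T$ small enough that $K T^\delta(C_F \mu^2 + C_I) \le \mu$; this gives $\Psi(\sZ_{\mu,T}) \subset \sZ_{\mu,T}$, and $v(0) = 0$ is built in.

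Next I would prove the contraction estimate. For $u_1, u_2 \in \sZ_{\mu,T}$, the difference $w = \Psi u_1 - \Psi u_2$ solves $\partial_t w + L w = F(u_1) - F(u_2)$ with $w(0) = 0$, so by Schauder and estimate (2),
\[
\|w\|_{C^{2m,0;\alpha}} \le K T^\delta \|F(u_1) - F(u_2)\|_{C^{0,0;\alpha}} \le K T^\delta C_F \mu \, \|u_1 - u_2\|_{C^{2m,0;\alpha}}.
\]
After possibly shrinking $T$ once more so that $K T^\delta C_F \mu \le \tfrac12$, $\Psi$ is a contraction on the complete metric space $\sZ_{\mu,T}$. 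The Banach fixed point theorem then produces a unique $v \in \sZ_{\mu,T}$ with $\Psi v = v$, which solves \eqref{eqn:integral-eqn-for-v} and hence \eqref{eqnforv}.

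The main obstacle is the self-mapping step: unlike the contraction estimate, which automatically gains a factor of $\mu$ from the quadratic structure of $F$, the inhomogeneous term $I$ is genuinely $O(1)$, so closing $\|v\| \le \mu$ requires extracting smallness from the short time interval — either via a $T^\delta$ gain in the Schauder/Duhamel estimate for the non-homogeneous problem, or by a more delicate bookkeeping of which norms actually see the time interval. I would make sure the statement and proof of Proposition \ref{thm:linearparabolic} (or a companion estimate) actually supplies this gain; if only the $T$-uniform bound on $K$ is available, one instead argues that $I$ contributes a term which, while not small in norm, is a fixed smooth function, and one runs the contraction on the affine subspace of perturbations of the particular solution $v_0 := \int_0^t H(t-s) I\,ds$ — i.e.\ set $v = v_0 + \tilde v$ and solve for $\tilde v$ small — which is the cleanest way to handle the inhomogeneity.
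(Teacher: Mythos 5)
Your proposal correctly identifies the crux of the argument: the decomposition of $\Psi u$ into a part driven by $F(u)$ (where the quadratic estimate supplies the factor of $\mu$) and a part driven by $I$ (which is $O(1)$ in norm and requires smallness from the short time interval). You also correctly spot that Proposition~\ref{thm:linearparabolic} as stated gives only a $T$-uniform Schauder constant, not a $T^\delta$ gain, so some extra argument is required for the inhomogeneous part. This is exactly the issue the paper has to deal with.

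Where you and the paper diverge is in how that gain is extracted. You propose either (a) a smoothing gain $T^\delta$ built into the Duhamel operator, or (b) recentering the iteration around the particular solution $v_0 = \int_0^t H(t-s)I\,ds$. The paper instead uses a third route, closer in spirit to an elementary version of (a) but specific to the smoothness of $I$: since $I$ is a fixed smooth (time-independent) section, $v_2 := \int_0^t H(t-s)I\,ds$ is smooth by parabolic regularity (Proposition~\ref{thm:parabolicregularity}) with all derivatives bounded in terms of $I$; writing $v_2(x,t) = \int_0^t (I(x) - Lv_2(x,s))\,ds$ gives $\|\nabla^j v_2\|_{L^\infty} \leq C T$ for $j \leq 2m$ directly by the fundamental theorem of calculus, and the remaining parabolic H\"older seminorms are then squeezed out by interpolation between the $L^\infty$ bounds and the (bounded) time and spatial derivatives. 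This yields $\|v_2\|_{C^{2m,0;\alpha}} \leq C_3 T$ with $C_3$ independent of $T$, which is then made $\leq \mu/2$. So the paper does supply the $T$-gain you flagged as necessary, but via parabolic regularity + FTC + interpolation rather than by appealing to a finer linear estimate or by an affine change of variables. Your alternative (b) would also work and arguably requires no regularity theory beyond Proposition~\ref{thm:linearparabolic}, but the paper's route is self-contained given the two propositions it has already stated. One small point of terminology: the paper's lemma asserts only that $\Psi$ is a self-map of $\sZ_{\mu,T}$; the contraction estimate (which you fold into the same proof) is stated and proved as a separate lemma in the paper.
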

\begin{proof}
Let $u \in \sZ_{\mu,T}$.  Define
\begin{align*}
 v_1 &= \int_0^t H(t-s) F(u) ds, \; \mbox{and}\\
 v_2 &= \int_0^t H(t-s) I ds.
\end{align*}
It suffices to estimate $v_1$ and $v_2$.  Now $v_1$ is a solution to
\begin{equation*}
\left\{ \begin{aligned}
\partial_t v_1 + L v_1 &= F(u), \\
		v_1(0) &= 0. 
\end{aligned} \right.
\end{equation*}
As a consequence, the Schauder estimate, followed by the estimate for $F$ implies
\begin{align*}
||v_1||_{C^{2m,0;\alpha}(M \times [0,T])} & \leq ||v_1||_{C^{2m,1;\alpha}(M \times [0,T])} \\
& \leq K ||v_1||_{C^{0,0;\alpha}(M \times [0,T])}\\ 
& \leq K C ||u||^2_{C^{2m,0;\alpha}(M \times [0,T])}\\
& \leq K C \mu ||u||_{C^{2m,0;\alpha}(M \times [0,T])}.
\end{align*}
We may now choose $\mu$ so small that $K C \mu < \frac{1}{2}$.  This proves
\[ ||v_1||_{C^{2m,0;\alpha}(M \times [0,T])} \leq \frac{\mu}{2}. \]
This estimate persists upon taking $T$ smaller as $K$ and $C$ both remain bounded as $T \rightarrow 0^+$.

Regarding the estimate for $v_2$, observe that we have a solution to
\begin{equation*}
\left\{ \begin{aligned}
\partial_t v_2 + L v_2 &= I, \\
		v_2(0) &= 0. 
\end{aligned} \right.
\end{equation*}
where $I$ is smooth and independent of $t$.  By parabolic regularity, $v_2$ is smooth on $M \times [0,T]$, and the norm of any number of derivatives of $v_2$ may be bounded in terms of the norms of $I$.  Fixing any $x$, we may write the equation as
\[ v_2(x,t) - v_2(x,0) = \int_0^t \partial_t v_2(x, s) ds \int_0^t I(x)-Lv_2(x,s) ds. \]
We may now estimate the $C^{2m,0;\alpha}(M \times [0,T])$ norm of $v_2$.  For example,
\begin{align*}
||v_2||_{L^{\infty}(M \times [0,T])} &\leq \int_0^T ||I||+||v_2||_{C^{2m,0;\alpha}(M \times [0,T])} ds \\
&= C_1 ||I|| T, 
\end{align*}
and similarly for the other spatial derivatives.  Thus it remains to estimate the parabolic H\"older seminorm $[v_2]_{\alpha;M}$.  The time derivative of $v_2$ is bounded in terms of the norm of $||I||$.  Therefore the following interpolation inequality for parabolic H\"older spaces:
\begin{align*} [v_2]_{\alpha;M} & \leq (2 ||v_2||_{L^{\infty}(M \times [0,T])})^{1-\frac{\alpha}{2}} ||\partial_t v_2||^{\frac{\alpha}{2}}_{L^{\infty}(M \times [0,T])} \\
& \hspace{1 in} + (2 ||v_2||_{L^{\infty}(M \times [0,T])})^{1-\alpha} || \nabla v_2||_{L^{\infty}(M \times [0,T])}^{\alpha},
\end{align*}
allows us to conclude that $||v_2||_{C^{2m,0;\alpha}(M \times [0,T])} \leq C_3 T$ for some constant $C_3$ independent of $T$.  We now take $T$ small enough so that
\[ ||v_2||_{C^{2m,0;\alpha}(M \times [0,T])} \leq \frac{\mu}{2}. \]

Thus $\Psi: \sZ_{\mu,T} \longrightarrow \sZ_{\mu,T}$ for $t \in [0, T]$, as required.
\end{proof}

We now prove that $\Psi$ is a contraction on $\sZ_{\mu,T}$. 
\begin{lemma}
$ \Psi: \sZ_{\mu,T} \longrightarrow \sZ_{\mu,T} $ is a contraction.
\end{lemma}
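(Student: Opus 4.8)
The plan is to exploit the fact that $\Psi$ is affine in its argument through the source term, so that the inhomogeneous term $I$ drops out of any difference. Given $u_1, u_2 \in \sZ_{\mu,T}$, set $w = \Psi u_1 - \Psi u_2$. Since $I$ does not depend on the argument, Duhamel's formula gives
\[
w(t) = \int_0^t H(t-s) \bigl( F(u_1) - F(u_2) \bigr)\, ds,
\]
so that $w$ solves the linear initial value problem $\partial_t w + L w = F(u_1) - F(u_2)$, $w(0) = 0$.

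Next I would apply the Schauder estimate of Proposition \ref{thm:linearparabolic}. Since $F(u_1) - F(u_2) \in C^{0,0;\alpha}(M \times [0,T])$ by the mapping properties of $F$, we obtain $w \in C^{2m,1;\alpha}(M \times [0,T])$ together with
\[
||w||_{C^{2m,0;\alpha}(M \times [0,T])} \leq ||w||_{C^{2m,1;\alpha}(M \times [0,T])} \leq K\, ||F(u_1) - F(u_2)||_{C^{0,0;\alpha}(M \times [0,T])}.
\]
Now I invoke the `difference of squares' Lipschitz estimate (property (2) of $F$), using that $||u_i||_{C^{2m,0;\alpha}(M \times [0,T])} \leq \mu$ on $\sZ_{\mu,T}$:
\[
||F(u_1) - F(u_2)||_{C^{0,0;\alpha}(M \times [0,T])} \leq C_F \max\{ ||u_1||, ||u_2|| \}\, ||u_1 - u_2|| \leq C_F \mu\, ||u_1 - u_2||_{C^{2m,0;\alpha}(M \times [0,T])}.
\]
Combining the two displays yields $||\Psi u_1 - \Psi u_2||_{C^{2m,0;\alpha}(M \times [0,T])} \leq K C_F \mu\, ||u_1 - u_2||_{C^{2m,0;\alpha}(M \times [0,T])}$.

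Finally I would shrink $\mu$ further, if necessary, than in the previous lemma (it is harmless to replace $\mu$ by the minimum of the two thresholds) so that $K C_F \mu < \tfrac{1}{2}$; since $K$ remains bounded as $T \rightarrow 0^+$, this contraction constant is not spoiled when $T$ is subsequently taken small enough to secure the automorphism property. Hence $\Psi$ is a contraction of the closed subset $\sZ_{\mu,T}$ of the Banach space $C^{2m,0;\alpha}(M \times [0,T])$, and the Banach fixed point theorem produces a unique $v \in \sZ_{\mu,T}$ with $\Psi v = v$, that is, a solution of the integral equation \eqref{eqn:integral-eqn-for-v} and therefore of \eqref{eqnforv}. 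There is essentially no serious obstacle here: the points to keep straight are that the $I$-term genuinely cancels in the difference (so that the resulting bound is truly Lipschitz in $u_1 - u_2$ rather than merely bounded), that the vanishing initial condition $w(0) = 0$ is exactly what lets us apply the Schauder estimate in the stated form, and that the single parameter $\mu$ must be chosen to meet the requirements of this lemma and of the preceding one simultaneously.
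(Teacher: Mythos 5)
Your proof is correct and follows essentially the same route as the paper: cancel the inhomogeneous term $I$ in the difference $\Psi u_1 - \Psi u_2$, apply the Schauder estimate from Proposition \ref{thm:linearparabolic} to the resulting linear problem with zero initial data, then invoke the Lipschitz (``difference of squares'') estimate for $F$ to obtain the contraction factor $K C_F \mu$, which is already less than $\tfrac{1}{2}$ by the choice of $\mu$ in the preceding lemma. The one small distinction is that you allow for shrinking $\mu$ further; the paper observes this is unnecessary because the constant $C_F$ appearing in the Lipschitz estimate is the same one appearing in the quadratic estimate, so the same $\mu$ works for both lemmas simultaneously.
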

\begin{proof}
We estimate for $u, v \in \sZ_{\mu,T}$ exactly as before, observing that the constants that occur are the same as in the previous proof.  In particular, applying the Schauder estimate followed by the estimate for $F$, we find
\begin{align*}
 || \Psi u &- \Psi v ||_{C^{2m,0;\alpha}(M \times [0,T])} \\
&\leq || \Psi u - \Psi v ||_{C^{2m,1;\alpha}(M \times [0,T])}\\
&= \left|\left| \int_0^t H(t-s)[F(u)(s)-F(v)(s)] ds \right|\right|_{C^{2m,1;\alpha}(M \times [0,T])}\\
& \leq K C \max \left\{ ||u||_{C^{2m,0;\alpha}(M \times [0,T])}, ||v||_{C^{2m,0;\alpha}(M \times [0,T])} \right\} || u - v||_{C^{2m,0;\alpha}(M \times [0,T])}\\
& \leq KC \mu || u - v ||_{C^{2m,0;\alpha}(M \times [0,T])}\mu 
\end{align*}
In the previous lemma $\mu$ was chosen so small that $K C \mu < \frac{1}{2}$, verifying the lemma.
\end{proof}
We now come to the short-time existence result 

\begin{thm}
For $T$ and $\mu$ sufficiently small, there is a unique solution $v \in C^{2m,1;\alpha}(M \times [0,T])$ to \eqref{eqnforv} in $\sZ_{\mu,T}$.  Consequently, there is a short-time solution to \eqref{adjustedflow}.
\end{thm}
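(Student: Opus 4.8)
The plan is to realize $v$ as the unique fixed point of the solution operator $\Psi$ of \eqref{eqn:integral-eqn-for-v}, to read off from its construction that $v$ solves the differential equation \eqref{eqnforv}, and finally to recover a solution of \eqref{adjustedflow} by undoing the substitution $g = h+v$.

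First I would fix $\mu>0$ so small that $KC\mu<\frac{1}{2}$, and then $T>0$ so small that the two preceding lemmas apply; since the relevant constants (the Schauder constant $K$ from Proposition \ref{thm:linearparabolic} and the constant $C$ in the estimates for $F=\sQ$) remain bounded as $T\to 0^+$, this is possible. Then $\Psi\colon \sZ_{\mu,T}\to\sZ_{\mu,T}$ is a contraction, and $\sZ_{\mu,T}$ is a closed subset of the Banach space $C^{2m,0;\alpha}(M\times[0,T])$, hence a complete metric space. The Banach fixed point theorem yields a unique $v\in\sZ_{\mu,T}$ with
\[ v(t)=\Psi(v)(t)=\int_0^t H(t-s)\,(F(v)+I)\,ds, \]
which in particular gives the asserted uniqueness in $\sZ_{\mu,T}$. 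Since $v$ lies in the image of $\Psi$, the Schauder estimate (as observed just before the automorphism lemma) shows $v\in C^{2m,1;\alpha}(M\times[0,T])$; and by Duhamel's principle $\Psi(v)$ is the unique $C^{2m,1;\alpha}$ solution of the linear Cauchy problem $\partial_t w+Lw=F(v)+I$, $w(0)=0$, so, as $v=\Psi(v)$, the function $v$ solves \eqref{eqnforv}.

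To obtain the last sentence of the theorem I would undo the conditioning. Recall that \eqref{eqnforv} was extracted from \eqref{eqn:eqnforv-1} with $F=\sQ$, $I=I_h=\sT(h)$, and $L$ the strongly elliptic operator of order $2m$ which (up to sign) is the linearization $L_h$ furnished by the hypothesis of Theorem \ref{maintheorem}; hence a solution $v$ of \eqref{eqnforv} is a solution of \eqref{eqn:eqnforv-1}. Setting $g:=h+v$, one has $\partial_t g=\sT(h+v)=T(g)+\Lie{W}{g}$ and $g(0)=h+v(0)=h$. Since $\|v\|_{C^0(M\times[0,T])}\le\|v\|_{C^{2m,0;\alpha}(M\times[0,T])}\le\mu$, a final shrinking of $\mu$ ensures $g(t)$ is positive definite for every $t\in[0,T]$, so $g$ is a genuine one-parameter family of metrics solving \eqref{adjustedflow}.

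Given the two lemmas just proved, the argument above is essentially bookkeeping; the one step deserving care is the passage from the integral (mild) form of \eqref{eqnforv} to its differential (strong) form, where one must invoke the existence-and-uniqueness content of Proposition \ref{thm:linearparabolic} together with Duhamel's principle, so that the fixed point of $\Psi$, a priori only in $C^{2m,0;\alpha}$, is identified with the $C^{2m,1;\alpha}$ solution of the Cauchy problem. The subsequent promotion of $v$ (and hence $g$) to a smooth solution, obtained by applying Proposition \ref{thm:parabolicregularity} iteratively to spatial derivatives of the equation, is carried out later in the proof of Theorem \ref{maintheorem} and is not needed here.
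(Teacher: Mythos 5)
Your proposal is correct and follows essentially the same route as the paper: apply the Banach fixed point theorem on the closed ball $\sZ_{\mu,T}$ using the two preceding lemmas, upgrade the fixed point to $C^{2m,1;\alpha}$ via the Schauder estimate, and set $g = h + v$. The extra remarks you add (the Duhamel identification of the mild and strong solutions, and shrinking $\mu$ to keep $g$ positive definite) are sound elaborations of steps the paper leaves implicit, not a different argument.
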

\begin{proof}
The previous lemmas allow us to apply the Banach fixed point theorem in the ball $\sZ_{\mu,T}$, to obtain a fixed point $v$ of \eqref{eqn:integral-eqn-for-v}.  The Schauder estimate applied to this equation shows that in fact $v \in C^{2m,1;\alpha}(M \times [0,T])$.  Taking $g = h + v$, we now have a short-time solution to the adjusted flow \eqref{adjustedflow} in $C^{2m,1;\alpha}(M \times [0,T])$. 
\end{proof}

We remark that the solution $v$ is unique within the ball $\sZ_{\mu,T}$.  It is unclear to what extent the solution $g$ is unique when $2m > 2$.


\subsection{Regularity}

In the previous section we found a solution $g$ to \eqref{adjustedflow} in \\ $C^{2m,1;\alpha}(M \times [0,T])$.  Parabolic regularity implies that $g$ is in fact smooth.

\begin{thm}
Let $g\in C^{2m,1;\alpha}(M \times [0,T])$ be a solution to the adjusted flow. Then $g\in C^{\infty,\infty}(M \times [0,T])$.
\end{thm}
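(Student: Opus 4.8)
The plan is a parabolic bootstrap built on Proposition \ref{thm:parabolicregularity}. Since $h$ is smooth and independent of $t$, it is enough to show that $v:=g-h$, which solves \eqref{eqnforv}, lies in $C^{\infty,\infty}(M\times[0,T])$. The delicate point is that the nonlinearity $F$ in \eqref{eqnforv} may involve the top-order derivative $\tilcov^{2m}v$, so that naively differentiating \eqref{eqnforv} would call for one more derivative of $v$ than the induction has produced; I would circumvent this by first rewriting \eqref{eqnforv} in quasilinear form.

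The $2m$-th order part of $\sT=T+\Lie{W}{g}$ is linear in the top covariant derivative of the metric (this is visible in the explicit expansions mentioned above, and holds for the flows by powers of $\Delta$ applied to $Ric$ and by the obstruction tensors). Accordingly I would split $F(v)=B(v)\,\tilcov^{2m}v+R(v)$, where $B$ is a matrix-valued expression in $v$ and $\tilcov^{j}v$ for $j\le 2m-1$ with $B(0)=0$ (as $F$ vanishes to second order at $v=0$), and $R$ is a nonlinearity, again vanishing to second order at $v=0$, in $v$ and its $h$-covariant derivatives of order at most $2m-1$; both $B$ and $R$ depend smoothly on $g=h+v$, $g^{-1}$ and the fixed geometry of $h$. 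Then \eqref{eqnforv} becomes
\[ \partial_t v+\tilde L_v v=R(v)+I,\qquad \tilde L_v:=L-B(v)\,\tilcov^{2m}, \]
and $\tilde L_v$ is strongly elliptic: its $2m$-th order part is that of the linearization of $\sT$ at the metric $g=h+v$, which is strongly elliptic by the hypothesis of Theorem \ref{maintheorem}, while its lower-order coefficients are those of $L$ together with expressions built from $g$, $g^{-1}$ and $\tilcov^{j}v$, $j\le 2m-1$.

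I would then run the spatial bootstrap by induction on $k$, proving $D^{\gamma}v\in C^{2m,1;\alpha}(M\times[0,T])$ for all $|\gamma|\le k$; the case $k=0$ is the hypothesis. Granting this for $k$, all spatial derivatives of $v$ of order $\le 2m+k$ lie in $C^{0,0;\alpha}$, so, since the coefficients of $\tilde L_v$ and the arguments of $R$ involve at most $2m-1$ derivatives of $v$, we get $\|D^{\gamma}a_{\beta}\|_{C^{0,0;\alpha}}\le C$ and $D^{\gamma}(R(v)+I)\in C^{0,0;\alpha}$ for all $|\gamma|\le k+1$ (using that $C^{0,0;\alpha}$ is an algebra, and that $g^{-1}$, a smooth function of a Hölder matrix field, remains in $C^{0,0;\alpha}$). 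Applying Proposition \ref{thm:parabolicregularity} to $\partial_t v+\tilde L_v v=R(v)+I$ then yields $D^{\gamma}v\in C^{2m,1;\alpha}$ for all $|\gamma|\le k+1$, closing the induction. Thus every spatial derivative of $v$ belongs to $C^{2m,1;\alpha}(M\times[0,T])$; in particular all spatial derivatives of $v$, and of $\partial_t v$, are parabolic-Hölder continuous.

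Finally I would gain the time derivatives. With full spatial regularity in hand, \eqref{eqnforv} reads $\partial_t v=G(v)$ with $G$ a smooth differential operator in the spatial jet of $v$, so $t\mapsto v(\cdot,t)\in C^{k}(M)$ is $C^{1}$ for every $k$; differentiating $\partial_t v=G(v)$ repeatedly in $t$ and using the smoothness of $G$ between the relevant spaces of sections shows inductively that $t\mapsto v(\cdot,t)\in C^{k}(M)$ is $C^{j}$ for all $j$ and $k$, that is $v\in C^{\infty,\infty}(M\times[0,T])$, whence $g=h+v\in C^{\infty,\infty}(M\times[0,T])$. (Regularity up to $t=0$ causes no trouble, since the compatibility conditions there hold automatically: $v(0)=0$ and $F$ vanishes to second order at $0$, so $\partial_t v(0)=I$, $\partial_t^{2}v(0)=-L I$, and so on are smooth.) I expect the only genuine obstacle to be the bookkeeping in the spatial bootstrap --- ensuring that each application of Proposition \ref{thm:parabolicregularity} buys exactly one new spatial derivative --- which is exactly why the top-order-linear part of $F$ is absorbed into $\tilde L_v$ at the outset.
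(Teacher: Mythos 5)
Your proposal is correct and follows essentially the same route as the paper: the paper likewise rewrites the adjusted flow in quasilinear form $\partial_t g + \sum_{|\beta|\le 2m} a_{\beta}(h,g)D^{\beta}g = 0$ with coefficients depending on at most $2m-1$ derivatives of $g$, iterates Proposition \ref{thm:parabolicregularity} to gain spatial derivatives one at a time, and then uses the equation itself to upgrade regularity in $t$. Your extra care in isolating the top-order-linear piece $B(v)\tilcov^{2m}v$ and checking strong ellipticity of the resulting operator is exactly the point the paper's terser argument relies on implicitly.
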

\begin{proof}
The proof is based on a bootstrap argument.  Return to the strongly parabolic equation \eqref{adjustedflow}. We write this abstractly as an equation in the form
\[ \partial_t g + \sum_{|\beta| = 0}^{2m} a_{\beta}( h, g) D^{\beta} g = 0. \]
where the coefficients $a_{\beta}$ are contractions of $h$, $g$, their inverses, and covariant derivatives of order up to $2m-1$. As $g \in C^{2m,1;\alpha}(M \times [0,T])$, we find that at worst $D^{\gamma} a_{\beta} \in C^{0,0;\alpha}(M \times [0,T])$, where $|\gamma| = 1$.  Applying Proposition \ref{thm:parabolicregularity}, we obtain that $D^{\gamma} g \in C^{2m,1;\alpha}(M \times [0,T])$ for all $|\gamma| = 1$, improving the spatial regularity.  Bootstrapping, and then using the equation to improve regularity in $t$ gives the desired result.
\end{proof}


\subsection{The Generalized DeTurck Trick}

It remains to show that a smooth solution to \eqref{adjustedflow} gives rise to a smooth solution to \eqref{originalflow}.  We note that \eqref{adjustedflow} is in the form required by Lemma \ref{deturck} so by pulling the metric back by the flow generated by $-W$ we find a solution to \eqref{originalflow}.


\section{Applications} \label{applications}

We now apply Theorem \ref{maintheorem} to flow by powers of the Laplacian of the Ricci tensor, and to flow using the ambient obstruction tensor.  As a special case of the latter flow, we have short-time existence for a type of Bach flow.

\subsection{Flow by Powers of the Laplacian of Ricci}

Recall Theorem \ref{plaprictheorem}:

\medskip
\noindent \textbf{Theorem \ref{plaprictheorem}.}
\emph{
Let $h$ be a smooth metric on a compact manifold M and let $p \geq 0$ be an integer.  Then there is a smooth short-time solution to the following flow:
\begin{equation}
\tag{$\ast \ast$}
\left\{
\begin{aligned}
\dvar{t} g &= 2 (-1)^{p+1} \Delta^p Ric\\
g(0) &= h.
\end{aligned}
\right.
\end{equation}
}

\medskip

Since the right hand side is a polynomial natural tensor, to prove Theorem \ref{plaprictheorem} it remains to find an appropriate vector field for use with the DeTurck trick and to show that the resulting flow has a strongly parabolic linearization.  Since covariant derivatives commute with Lie derivatives modulo curvature, we can generalize the vector field $V$ used for the original Ricci flow:
\begin{equation} \label{diffvf}
V^k = g^{pq} (\Gamma_{pq}^k - \tilde{\Gamma}_{pq}^k).
\end{equation}
Let
\begin{equation*}
W = (-1)^p \Delta^p V
\end{equation*}
and consider the flow:
\begin{equation} \label{plapricdeturck}
\left\{
\begin{aligned}
\dvar{t} g &= 2 (-1)^{p+1} \Delta^p Ric + \Lie{W}{g} \\
g(0) &= h.
\end{aligned}
\right.
\end{equation}

We now need to show that the linearization of the right hand side is strongly elliptic.  First, we calculate the linearization.  In what follows, Let 
\[ L^p = g^{r_1 s_1} \cdots g^{r_p s_p} \partial_{r_1} \partial_{s_1} \cdots \partial_{r_p} \partial_{s_p},\]
with the usual convention that a tilde implies the use of $h$ instead of $g$.

\begin{lemma} Let $g = h+ sv$ where $v$ is a smooth symmetric two-tensor and $s$ is a real parameter.  Working locally and focusing only on the top order terms, we have
\begin{equation*} 
\lin 2 (-1)^{p+1} \Delta^p Ric_{jk} + \left(\Lie{W}{g}\right)_{jk}
	= (-1)^p \tilde{L}^{p+1} v_{jk} + \sP(\partial^{2p+1} v, \partial^{2p+2} h, h^{-1}).
\end{equation*}
\end{lemma}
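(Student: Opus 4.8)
The plan is to linearize the two summands $2(-1)^{p+1}\Delta^p Ric$ and $(\Lie{W}{g})_{jk}$ separately, written entirely through $h$-covariant derivatives, to observe that the only surviving principal term (of differential order $2p+2$ in $v$) is $(-1)^p\tilde{\Delta}^{p+1}v_{jk}$ because the divergence and trace-Hessian obstructions cancel, exactly as in the classical Ricci--DeTurck calculation, and finally to trade $\tilde{\Delta}^{p+1}$ for the coordinate operator $\tilde{L}^{p+1}$ modulo lower-order terms.

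First I would record three reductions. (i) Since $V^k=g^{pq}(\G{p}{q}{k}-\tilG{p}{q}{k})$ vanishes identically when $g=h$, so does $W=(-1)^p\Delta^p V$, and therefore at $s=0$ we have
\[ \lin (\Lie{W}{g})_{jk} = \tilde{\nabla}_j (\lin W)_k + \tilde{\nabla}_k (\lin W)_j, \]
with indices lowered by $h$. (ii) In $V^k$ the $\delta g^{pq}$ term vanishes because $\G{p}{q}{k}=\tilG{p}{q}{k}$ at $g=h$, which yields the \emph{exact} identity $\lin V^k = h^{pq}\,\delta\G{p}{q}{k} = h^{kl}\big((\mathrm{div}_h v)_l-\tfrac12\tilde{\nabla}_l(\tr_h v)\big)$, where $\delta\G{p}{q}{k}$ is the usual first variation of the Christoffel symbols. (iii) For a natural operator $\Delta^p$ applied to a tensor, varying $g$ inside the $\Delta^p$---whether in an inverse-metric factor or through a Christoffel symbol in a covariant derivative---produces only terms lying in $\sP(\partial^{2p+1}v,\partial^{2p+2}h,h^{-1})$, so that
\[ \lin \Delta^p Ric_{jk} = \tilde{\Delta}^p(\lin Ric_{jk}) + \sP(\partial^{2p+1}v,\partial^{2p+2}h,h^{-1}), \]
and likewise $\lin (\Delta^p V)^k = \tilde{\Delta}^p(\lin V)^k$ modulo $\sP(\partial^{2p+1}v,\partial^{2p+2}h,h^{-1})$.

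Then I would substitute the standard linearization of the Ricci tensor in $h$-covariant form,
\[ \lin Ric_{jk} = -\tfrac12\tilde{\Delta}v_{jk} + \tfrac12\big(\tilde{\nabla}_j(\mathrm{div}_h v)_k+\tilde{\nabla}_k(\mathrm{div}_h v)_j-\tilde{\nabla}_j\tilde{\nabla}_k(\tr_h v)\big) + \sP(\partial^1 v,\partial^2 h,h^{-1}), \]
apply $\tilde{\Delta}^p$ and multiply by $2(-1)^{p+1}$: the leading term becomes $(-1)^p\tilde{\Delta}^{p+1}v_{jk}$, while the three obstruction terms become $-(-1)^p\tilde{\Delta}^p$ applied to $\tilde{\nabla}_j(\mathrm{div}_h v)_k+\tilde{\nabla}_k(\mathrm{div}_h v)_j-\tilde{\nabla}_j\tilde{\nabla}_k(\tr_h v)$, up to $\sP$. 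On the Lie side, using (i), (ii), and $\lin W=(-1)^p\tilde{\Delta}^p(\lin V)$ modulo $\sP$ from (iii), and commuting the outer $\tilde{\nabla}_j,\tilde{\nabla}_k$ past $\tilde{\Delta}^p$---each commutator generates curvature of $h$ and so lowers the differential order in $v$ by two, landing in $\sP(\partial^{2p}v,\dots)$---one finds that $\lin (\Lie{W}{g})_{jk}$ equals $(-1)^p\tilde{\Delta}^p$ applied to $\tilde{\nabla}_j(\mathrm{div}_h v)_k+\tilde{\nabla}_k(\mathrm{div}_h v)_j-\tilde{\nabla}_j\tilde{\nabla}_k(\tr_h v)$, up to $\sP$. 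This is precisely the negative of the obstruction contribution from the Ricci side, so the two cancel and $(-1)^p\tilde{\Delta}^{p+1}v_{jk}$ survives modulo $\sP(\partial^{2p+1}v,\partial^{2p+2}h,h^{-1})$. Expanding each of the $p+1$ factors $\tilde{\Delta}=h^{ab}(\partial^2_{ab}-\tilG{a}{b}{c}\partial_c)$ in coordinates replaces $\tilde{\Delta}^{p+1}$ by $\tilde{L}^{p+1}$ at the cost of further terms in $\sP(\partial^{2p+1}v,\partial^{2p+2}h,h^{-1})$, which is the assertion.

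The conceptual content---that linearization commutes with $\Delta^p$ at top order, and that $W$ has been chosen precisely so the $p$-fold Laplacian of the classical DeTurck cancellation goes through verbatim---is short. The step requiring care, which I expect to be the main obstacle, is the order bookkeeping behind reduction (iii) and the commutator terms: one must verify that every term produced by varying the metric inside $\Delta^p$, and every covariant-derivative commutator, genuinely lies in $\sP(\partial^{2p+1}v,\partial^{2p+2}h,h^{-1})$ rather than creeping up to differential order $2p+2$ in $v$. This reduces to the facts that $\delta(g^{-1})$ carries no derivatives of $v$, that $\delta\Gamma$ carries exactly one, and that $[\tilde{\nabla},\tilde{\nabla}]$ is zeroth order; granting these, the count is routine.
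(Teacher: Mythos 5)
Your proof is correct; the lemma holds, and your bookkeeping of orders in reductions (i)--(iii) is sound. However, your organization is genuinely different from the paper's. You linearize \emph{first}, writing $\lin Ric_{jk}$ in $h$-covariant form and computing $\lin V$ and $\lin W$ covariantly, then show that the DeTurck cancellation goes through in the linearized operators $\tilde{\Delta}^p(\cdot)$, and only at the very end expand $\tilde{\Delta}^{p+1}$ into the coordinate operator $\tilde{L}^{p+1}$. The paper instead performs the cancellation at the \emph{nonlinear} level entirely in local coordinates: it writes out $(\Delta^p Ric)_{jk}$ and $(\Lie{W}{g})_{jk}$ as $L^p$ applied to explicit coordinate second derivatives of $g$, observes that the two ``bad'' blocks $L^p g^{il}(\partial^2_{ik}g_{lj}+\partial^2_{jl}g_{ik}-\partial^2_{jk}g_{il})$ cancel outright to leave $(-1)^p L^{p+1}g_{jk}+\sP$, and then linearizes this clean coordinate expression at $g=h$. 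The paper's route is shorter because the cancellation is visible term-by-term in coordinates without ever invoking the covariant first-variation formulas or commutator arguments; your route, by staying covariant until the last step, makes the conceptual link to the classical second-order DeTurck computation more transparent and isolates cleanly why $W$ was chosen as $(-1)^p\Delta^p V$. Both establish the same principal symbol and the same $\sP(\partial^{2p+1}v,\partial^{2p+2}h,h^{-1})$ error.
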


\begin{proof}
The main step is showing that the adjustment by the Lie derivative eliminates various problematic terms.  Working in coordinates we compute
\begin{equation*}
(\Delta^p Ric)_{jk} = -\frac{1}{2} L^{p+1} g_{jk}
	+ \frac{1}{2} L^p \left[(g^{il}(\partial_{ik}^2 g_{lj} + \partial_{jl}^2g_{ik} - \partial_{jk}^2 g_{il})\right]
	+ \sP(\partial^{2p+1} g, g^{-1}),
\end{equation*}
and
\begin{equation*}
\begin{split}
\left(\Lie{W}{g}\right)_{jk} &= (-1)^p \Delta^p \left(\Lie{V}{g} \right)_{jk}
						+ \sP(\partial^{2p+1} g, g^{-1}, \partial^{2p} h, h^{-1}) \\
			&= (-1)^p L^p g^{il}(\partial_{kl}^2 g_{ij} + \partial_{jl}^2 g_{ik} - \partial_{jk}^2 g_{il})
					+ \sP(\partial^{2p+1} g, g^{-1}, \partial^{2p+2}h, h^{-1}),
\end{split}
\end{equation*}
so that
\begin{equation*}
2 (-1)^{p+1} \Delta^p Ric_{jk} + \left(\Lie{W}{g}\right)_{jk}
		= (-1)^p L^{p+1} g_{jk} + \sP(\partial^{2p+1} g, g^{-1}, \partial^{2p+2}h, h^{-1}).
\end{equation*}
Calculating the linearization of this gives us the result.
\end{proof}

Now that we have the linearization, we can verify that it is strongly elliptic by showing that it satisfies \eqref{strongel}.  Making the necessary substitutions to get the principal symbol for our operator and noting that at top order, our system is uncoupled, we have
\begin{equation*}
\delta^{KL}h^{j_1k_1} h^{j_2 k_2} \cdots h^{j_{p+1} k_{p+1}}
		\xi_{j_1} \xi_{k_1} \xi_{j_2 k_2} \cdots \xi_{j_{p+1} k_{p+1}} \eta_{K} \eta_L
				= |\xi|^{2(p+1)}_h |\eta|^2,
\end{equation*}
where $1 \leq K, L \leq n(n+1)/2$ index the components of the metric.  This shows that our operator is strongly elliptic.  Therefore, Theorem \ref{maintheorem} applies and we may conclude that \eqref{plapricflow} admits a smooth solution for a short time.

As mentioned, if $p = 0$, we have Ricci flow.  When $p=1$, the steady state solutions are ``Ricci-harmonic'' metrics.

\subsection{Flow by the Ambient Obstruction Tensor}

Recall Theorem \ref{ambienttheorem}:

\medskip
\noindent \textbf{Theorem \ref{ambienttheorem}.}
\emph{
Let $h$ be a smooth metric on a compact manifold of even dimension $n \geq 4$ and let
\[
c_n = \frac{1}{2^{\frac{n}{2}-1}\left(\frac{n}{2}-2\right)!(n-2)(n-1)}.
\]
Then there is a smooth short-time solution to the following flow:
\begin{equation}
\tag{$\ast \! \ast \! \ast$}
\left\{
\begin{aligned}
\dvar{t} g &= \amb_n + c_n (-1)^{\frac{n}{2}} (\Delta^{\frac{n}{2}-1} S) g  \\
g(0) &= h.
\end{aligned}
\right.
\end{equation}
}

\medskip

\begin{proof}
Using \eqref{obstructiondef} to write the flow equation out a bit, we have
\begin{equation} \label{ambflowexpanded}
\dvar{t} g = c_n (n-1) 2 (-1)^{\frac{n}{2}} \Delta^{\frac{n}{2}-1} Ric
			+ c_n (n-2) (-1)^{\frac{n}{2}-1} \Delta^{\frac{n}{2}-2} \nabla^2 S + T_{n-1}.
\end{equation}
As in the previous section, the right hand side is a polynomial natural tensor and all we need to do is find a vector field to use in the DeTurck trick and show that the resulting flow has a strongly parabolic linearization.  We deal with the first term on the right side in \eqref{ambflowexpanded} as we did for powers of the Laplacian of Ricci above.  The second term can also be expressed as the Lie derivative of a vector field, and so will also be absorbed.  In particular, note that
\begin{align*}
\Delta^{\frac{n}{2}-2} \nabla^2 S &= \nabla^2 \Delta^{\frac{n}{2}-2} S + T_{n-2} \\
				&= \frac{1}{2} \Lie{\nabla(\Delta^{\frac{n}{2}-2} S)^{\sharp}}{g} + T_{n-2},
\end{align*}
where $T_{n-2}$ is a polynomial natural tensor, and so with $V$ defined in \eqref{diffvf} we let
\begin{equation*}
W = c_n (n-1) (-1)^{\frac{n}{2}-1} \Delta^{\frac{n}{2}-1} V
			+ \frac{c_n (n-2)(-1)^{\frac{n}{2}}}{2}\Delta^{\frac{n}{2}-2} S.
\end{equation*}
The adjusted flow
\begin{equation*}
\left\{
\begin{aligned}
\dvar{t} g &= \amb_n + c_n (-1)^{\frac{n}{2}} \Delta^{\frac{n}{2}-1} S g + \Lie{W}{g} \\
g(0) &= h
\end{aligned}
\right.
\end{equation*}
differs from \eqref{plapricdeturck} at top order only by a positive constant and so can be solved by the same means.
\end{proof}

When $n = 4$, \eqref{ambflow} becomes the following flow involving the Bach tensor $B$:
\begin{equation*}
\left\{
\begin{aligned}
\dvar{t} g &= B + \frac{1}{12} \Delta S g \\
g(0) &= h.
\end{aligned}
\right.
\end{equation*}
While this flow, and the more general flow by the ambient obstruction tensor, may not be the first choice for ``Bach Flow,'' it does have steady state solutions which are Bach flat, or more generally cause the ambient obstruction tensor to vanish.  Further, the vanishing power of the Laplacian of scalar curvature implies these metrics constant scalar curvature.  These results follow from the fact that the ambient obstruction tensor is trace free.  Thus, tracing the flow at the steady state shows that the given power of the Laplacian of $S$ must vanish, which implies that $S$ must be constant upon integration by parts.  Once we know this, it follows that the ambient obstruction tensor is also zero.  In particular, steady state solutions to this Bach flow are Bach-flat constant scalar curvature metrics.

Finally, we explore how the conformal invariance affects flow by the ambient obstruction tensor.  We find that, while an alternative geometric flow can be constructed, it does not seem to yield much further insight.

Let $\hat{g} = \rho^2 g$, where $\rho$ is a positive function to be determined on $M \times [0,T]$.  If $g$ solves \eqref{ambflow} then
\begin{align*}
\dvar{t} \hat g &= \rho^2 \dvar{t} g + 2 \rho \dvar{t} \rho g \\
			&= \rho^2 \left(\amb_n + c_n (-1)^{\frac{n}{2}} (\Delta^{\frac{n}{2}-1} S) g\right)
				+ 2 \rho \dvar{t} \rho g \\
			&= \rho^n \hat{\amb}_n
				+ \left(c_n (-1)^{\frac{n}{2}} (\Delta^{\frac{n}{2}-1} S) \rho^2
										+ 2 \rho \dvar{t} \rho \right) g.
\end{align*}
Requiring that the second term vanish, we may solve a simple ODE for $\rho$ at each point of $M$.  If we choose our initial condition for $\rho$ to be $\rho|_{t=0} = 1$, we find
\[
\rho = e^{-\frac{1}{2}\int_0^t \phi d \tau}
\]
where $\phi = c_n(-1)^{\frac{n}{2}} (\Delta^{\frac{n}{2}-1} S)$.
This expression still involves $g$.  To get everything in terms of $\hat{g}$, we would need to use the conformal change formula for $\Delta^{\frac{n}{2}-1} S$.  On one hand, this gives us a solution to a flow of the form
\begin{equation*}
\left\{
\begin{aligned}
\dvar{t} g &=  \kappa \amb_n \\
g(0) &= h,
\end{aligned}
\right.
\end{equation*}
but on the other hand, $\kappa$ is a complicated expression involving top order derivatives of the metric and seems somewhat further removed from an ideal form of ambient obstruction flow than \eqref{ambflow}.


 \bibliographystyle{amsalpha}
 \bibliography{geomflow}

\end{document}